\newtheorem{thm}{Theorem}[section]
\newtheorem{corol}[thm]{Corollary}
\newtheorem{lemma}[thm]{Lemma}
\newtheorem{prop}[thm]{Proposition}
\newtheorem{defin}[thm]{Definition}
\theoremstyle{remark}
\newtheorem{rem}[thm]{Remark}
\newtheorem{ex}[thm]{Example}
\newenvironment{remark}{\begin{rem}\rm}{\qee\end{rem}}
\newcommand{\cO}{{\mathcal O}}
\newcommand{\rk}{{\mbox{rk}\,}}
\newcommand{\PP}{{\mathbb P}}
\newcommand{\R}{{\mathbb R}}
\newcommand{\C}{{\mathbb C}}
\newcommand{\Z}{{\mathbb Z}}
\newcommand{\Q}{{\mathbb Q}}
\newcommand{\qee}{\mbox{\hspace{0.2mm}}\hfill$\triangle$}
\newcommand{\var}{{\PP_\Sigma}}
\begin{document} 
\begin{flushright} SISSA Preprint 39/2017/MATE
% \\ {\tt arXiv:}
\end{flushright}

\bigskip\bigskip
\title[Oda  and Hodge ]{On the Hodge conjecture  for  \\[5pt]  hypersurfaces in toric varieties}
\author{\small Ugo Bruzzo$^1$ and Antonella Grassi$^2$}
\address{\small $^1$ (SISSA) Scuola Internazionale Superiore di Studi Avanzati,  \\ Via Bonomea 265, 34136 Trieste, Italia\\
  Istituto Nazionale di Fisica Nucleare, Sezione di Trieste}
\address{\small $^2$ Department of Mathematics, University of Pennsylvania,  \\ 209 S 33rd St., Philadelphia, PA 19104, USA }
%\footnote{Support for this work was provided by the NSF Research Training Group Grant DMS-0636606, by {\sc prin} ``Geometria delle variet\`a  algebriche e dei loro spazi dei moduli '' and  the {\sc infn} project {\sc pi14} ``Nonperturbative dynamics of gauge theories''. U.B. is a member of the {\sc vbac} group.}}%\email{bruzzo@math.upenn.edu, grassi@sas.upenn.edu}
\thanks{E-mail: {\tt  bruzzo@sissa.it, grassi@sas.upenn.edu}}
\date{\today}

\subjclass[2000]{14C22, 14J70, 14M25, 32S35}
\begin{abstract} We show  that very general hypersurfaces in odd-dimensional simplicial projective toric varieties verifying a certain combinatorial property satisfy the Hodge conjecture  (these include projective spaces). This gives a connection between the Oda conjecture  and   Hodge conjecture. We also give an explicit criterion which depends on the degree  for very general hypersurfaces for the combinatorial condition to be verified.
 \end{abstract}

\thanks{Research partially supported by  PRIN ``Geometria delle variet\`a algebriche,''   GNSAGA-INdAM, and  the University of Pennsylvania Department of Mathematics Visitors Fund. U.B. is a member of the VBAC group}

\maketitle 

\section{Introduction}
The classical Noether-Lefschetz theorem states that if $Y$ is a very general surface in the linear system $\vert\mathcal O_{\mathbb P^3}(d)\vert$,
with $d\ge 4$, then the Picard number of $Y$ is 1. This was generalized (see e.g.~\cite{CMP03}) to very general, very ample hypersurfaces $Y$ in a smooth projective variety $X$;  in this case the result should be that $H^{p-1,d-p}(Y) $,
 the   cohomology of $Y$ in degree $(p-1,d-p)$ (where $d=\dim X$,
and $1\le p \le d-1$),  is the restriction of the cohomology of $X$ in the same degree --- or, equivalently,
that the primitive cohomology $PH^{p-1,d-p}(Y) $ vanishes.
However,  the condition that the hypersurface $Y$ has a big enough degree  is no longer sufficient to ensure that.
 A suitable sufficient  condition is that the natural morphism
\begin{equation}\label{surjective}  T_Y \mathcal M_{L} \otimes PH^{p,d-1-p}(Y)\to  PH^{p-1,d-p}(Y) \end{equation} 
(where $\mathcal M_{L} $ is the moduli space of hypersurfaces in $X$ in a very ample linear system $\vert L \vert$) is surjective
(one also needs some suitable vanishings).
 
This same result was proved by us when $X$ is a simplicial projective toric variety, possibly singular. In that case the fact that the morphism
\eqref{surjective} is surjective is a consequence of a combinatorial assumption on the toric variety $X$, which can be expressed in terms of
the Cox ring $S$ of $X$: the multiplication morphism
$$S_\alpha\otimes S_\beta \to S_{\alpha+\beta}$$
is  surjective whenever $\alpha$ and $\beta$ are an ample and nef class in the class group of $X$ \cite{BG2}. 
We  call ``Oda varieties" the varieties which satisfy this condition, after  Oda \cite{OdaUnpubl}. Examples of Oda varieties are given in Section \ref{Oda}. Projective spaces are the easiest examples. 

%The surjectivity of the morphism  in equation \ref{surjective} holds for a Oda variety $\var$ provided that $L+K_\var$ is nef.

A toric variety $X$ satisfies the Hodge conjecture: every cohomology class in $H^{p,p}(X,\mathbb  Q)$ is a linear combination
of algebraic cycles. Then by the Lefschetz theorem, the Hodge conjecture holds true for a hypersurface  $Y$ in $X$
for $p < (d-1)/2$, where $d=\dim\var$, and by Poincar\'e duality, also for $p>(d-1)/2$. The question remains open for the intermediate cohomology
$p = (d-1)/2$ when $d$ is odd. However for Oda varieties, and for a very general hypersurface, this case is covered by the fact that the restriction morphism $H^{p,p}(X,\mathbb Q) \to H^{p,p}(Y,\mathbb Q)$ is surjective. Therefore we obtain:

\begin{thm} Let $\var$ be an Oda variety of odd dimension $d=2p+1$, and let $L$ be a very ample divisor in $\var$ such that
$pL+K_\var$ is nef. Then the Hodge conjecture with rational coefficients holds for the very general hypersurface in the linear system $\vert L \vert$.
\end{thm}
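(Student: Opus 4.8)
The plan is to reduce the Hodge conjecture for $Y$ to a single Noether--Lefschetz-type vanishing in the middle degree, and then to feed in the combinatorial (Oda) hypothesis to secure that vanishing for the very general member of $\vert L\vert$. First I would record the reductions sketched in the introduction. Since $\var$ is a complete simplicial toric variety, its rational cohomology is generated by classes of torus-invariant subvarieties, is concentrated in even degree, and is purely of type $(k,k)$; hence every class in $H^{k,k}(\var,\Q)$ is algebraic. For the very general (hence quasi-smooth) $Y\in\vert L\vert$, the weak Lefschetz theorem for quasi-smooth ample hypersurfaces in simplicial toric varieties gives $H^{k}(\var,\Q)\xrightarrow{\sim}H^{k}(Y,\Q)$ for $k<\dim Y=2p$ and an injection for $k=2p$, compatibly with the Hodge structures. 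Since restriction carries algebraic classes to algebraic classes, the conjecture holds on $Y$ in every degree $2p'$ with $p'<p$; applying the hard Lefschetz isomorphism (equivalently, Poincaré duality) on $Y$, it also holds for $p'>p$. Thus the only case left to settle is the middle piece $H^{p,p}(Y,\Q)$.

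Next I would isolate the primitive part. Writing the Lefschetz decomposition $H^{2p}(Y,\Q)=\rho\big(H^{2p}(\var,\Q)\big)\oplus PH^{2p}(Y,\Q)$, where $\rho$ is restriction, the summand $\rho\big(H^{p,p}(\var,\Q)\big)$ consists of restrictions of algebraic classes and is therefore algebraic. Consequently the Hodge conjecture for $Y$ is equivalent to the assertion that $PH^{p,p}(Y,\Q)$ is spanned by algebraic classes, and this is implied by the vanishing $PH^{p,p}(Y,\Q)=0$, i.e.\ by surjectivity of $\rho\colon H^{p,p}(\var,\Q)\to H^{p,p}(Y,\Q)$.

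It remains to establish this surjectivity for the very general $Y$, and here the hypotheses enter: this is precisely the toric Noether--Lefschetz statement referred to in the introduction. With $d=2p+1$, the pertinent instance of \eqref{surjective} (its running index taken equal to $p+1$) is the infinitesimal variation of Hodge structure map
\[
T_Y\mathcal M_{L}\otimes PH^{p+1,p-1}(Y)\longrightarrow PH^{p,p}(Y),
\]
whose surjectivity for general $Y$ forces $PH^{p,p}(Y,\Q)=0$. Dually, under the Hodge--Riemann pairing this surjectivity says that no nonzero rational class in $PH^{p,p}(Y)$ is annihilated by every Kodaira--Spencer direction, so that each such class has a proper Hodge locus in $\mathcal M_{L}$; as these classes are countable and, by irreducibility of the monodromy on primitive cohomology, none is monodromy-invariant, the very general $Y$ carries no nonzero primitive $(p,p)$-class.

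The main obstacle is to verify the hypotheses of this machinery exactly at the middle degree, and this is where the nef condition plays its role. Through the Batyrev--Cox--Mavlyutov residue description, the primitive groups are identified with graded pieces of the Jacobian ring $R=S/J_f$: one finds $PH^{p,p}(Y)\cong R_{(p+1)L+K_\var}$ and $PH^{p+1,p-1}(Y)\cong R_{pL+K_\var}$, while the IVHS map above is multiplication by the Kodaira--Spencer classes, which lie in degree $L$. Thus surjectivity of the displayed map is governed by the Cox-ring multiplication $S_{L}\otimes S_{pL+K_\var}\to S_{(p+1)L+K_\var}$; since $L$ is ample and, by hypothesis, $pL+K_\var$ is nef, this is exactly an \emph{ample $\otimes$ nef} multiplication, which is surjective precisely by the Oda property of $\var$ (specializing, for surfaces in $\PP^3$, to $pL+K=(e-4)H\ge 0$, i.e.\ $\deg\ge 4$). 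The care needed is therefore to ensure quasi-smoothness of the very general $Y$, to apply the residue description with $\Q$-coefficients on the possibly singular $\var$, and to invoke the Bott--Steenbrink--Danilov vanishing theorems underlying both the Lefschetz isomorphisms and the identification of the primitive groups.
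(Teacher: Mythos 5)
Your proposal is correct and takes essentially the same route as the paper: reduce to the middle-degree $(p,p)$ classes, run the toric infinitesimal Noether--Lefschetz argument (your Hodge-locus/countability sketch is precisely the content of Theorem \ref{NL} and Proposition \ref{partialconv}), identify the IVHS map with the Jacobian-ring multiplication \eqref{gammaR} via Batyrev--Cox, lift surjectivity to the Cox ring level, and apply the Oda property to the ample class $\beta=[L]$ and the nef class $p\beta-\beta_0=[pL+K_\var]$. The only blemish is the appeal to ``irreducibility of the monodromy on primitive cohomology,'' which is neither established for quasi-smooth hypersurfaces in simplicial toric varieties nor needed: the duality argument you already give (no nonzero rational primitive $(p,p)$-class is annihilated by every Kodaira--Spencer direction) is what rules out a Hodge locus filling the whole moduli space, exactly as in the proof of Proposition \ref{partialconv}.
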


 This gives an explicit criterion which depends on the degree  for very general hypersurfaces to satisfy the Hodge conjecture.

We also formulate a partial converse of this results and pose some questions.

\smallskip\noindent{\bf Acknowledgments.} We thank B.~Fulton for a stimulating remark and V.~Srinivas for interesting correspondence about the Hodge conjecture.

\bigskip
\section{Hypersurfaces in toric simplicial varieties} 

We recall some basic facts about hypersurfaces in projective simplicial toric varieties and their cohomology.  We mainly follow   \cite{BaCox94} and \cite{BG1}, also in the notation. All schemes are  over the complex numbers.
\subsection{Preliminaries and notation} 
 Let $N$ be a free abelian group of rank $d$.
%  Let $M$ be a free abelian group of rank $d$, $N=\Hom(M,\Z)$, and  $N_\R=N\otimes_\Z\R$.
A rational simplicial complete $d$-dimensional fan $\Sigma$ in $N_\R=N\otimes_\Z\R$ defines a complete toric variety $\var $ of dimension $d$ 
  with only  Abelian quotient singularities. 
In particular $\var$  is an orbifold;  $\var$ is also called  {\it simplicial}.
% Moreover, $\var$ is simply connected. 
 We assume here that $\var$ is  also projective.

Let  $\{\rho\}$ be the rays  of the fan $\Sigma$ and  $\{x_\rho\}$  the homogeneous coordinates associated with them.
Let $S = \C[x_\rho ]$ be the   {Cox ring}  of $\var$, that is  the algebra over $\C$ generated by the homogeneous coordinates $x_\rho$ .
%Let $S$ be the   {\it Cox ring} of $\Sigma$, that is, the algebra over $\C$ generated by the homogeneous coordinates associated with the rays of $\Sigma$; so $S = \C[x_\rho ]$, where $\rho$ runs over the rays of the fan \cite{Cox95}.  
Each ray $\rho$   and each coordinate $x_\rho$ determine a torus invariant Weil divisor $D_\rho$. A monomial $ \prod x_\rho^{a_\rho}$ determines a Weil divisor $D= \sum_\rho a_{\rho}D_\rho$. 

 $S$ is graded by the class group $ Cl(\var)$ of $\var$. Denoting by  $\beta=[D] $ the class of $D$ in $Cl(\var)$,
one has  $S =\oplus_{\beta\in Cl(\var)}S_\beta$. 

The Cox ring generalizes the coordinate ring of $\mathbb P^d$, in which case $S$ is the polynomial ring in $d+1$ variables over $\C$.

\subsection{Hypersurfaces  and their cohomology}\label{hypersurfaces}

Let $L$ be a nef divisor.
Then  $\cO_\var(L)$ is generated by its global sections \cite[Th.~1.6]{Mavlyutov-semi} and a general  hypersurface  $X \in |L|$ is { quasi-smooth}, that is, its only singularities are those inherited from $\var$ \cite[Lemma 6.6, 6.7]{MulletToric}. 
In particular,  if $\var$ is simplicial $X$ is  also an orbifold.
% Let $X$ be a  hypersurface  in $\var$ whose class  in $Cl(\var)$ is $\beta$; if $X$ is general, it is {\it quasi-smooth}, that is, its only singularities are those inherited from $\var$ \cite[Lemma 6.6, 6.7]{MulletToric}. In particular, $X$ is an orbifold if $\var$ is simplicial.

 We recall some facts from \cite{BaCox94} and Section 3 of \cite{BG1}. For the reader's convenience the Appendix provides the relevant  material
 taken from \cite{BG1}.
 
Let $L$ be a very ample Cartier divisor and  $X$ a general  hypersurface  $X \in |L|$. 
 The homotopy hyperplane Lefschetz theorem
% which holds for the embedding $ X \hookrightarrow \var $
implies that  $X$ is also simply connected if $\dim (\var) \geq 3$ \cite[Thm.~1.2 Part II]{GoMac88}.  The hard Lefschetz theorem holds also for projective orbifolds  \cite{SaitoKyoto, Zaf09}).

The complex cohomology of an orbifold has a pure
Hodge structure in each dimension \cite{Stee77,Tu86}:
 $H^{d-1}( \var ,\C)$ and $ H^{d-1}(X,\C)$ have then  pure Hodge structures.

Let  $i\colon X \to \var $ be  the natural  inclusion and  $i^\ast\colon H^\bullet( \var ,\C)
\to H^\bullet(X,\C)$ the associated morphism in cohomology; $i^\ast\colon H^{d-1}( \var ,\C) \to H^{d-1}(X,\C)$ is injective by Lefschetz's theorem. %\cite{BaCox94}.(where $d=\dim\var$). 
The morphism $i^\ast$ is compatible with the Hodge structures.

\begin{defin}\cite[Def.~10.9]{BaCox94} The primitive cohomology group $PH^{d-1}(X)$ is the quotient
$H^{d-1}(X,\C)/$ $i^\ast(H^{d-1} (\var ,\C))$.
\end{defin}

\noindent $PH^{d-1}(X)$  inherits
a pure Hodge structure, and one can write $$ PH^{d-1}(X) = \bigoplus_{p=0}^{d-1} PH^{p,d-1-p}(X).$$

Let ${\mathscr Z}$ be the open subscheme of $\vert L \vert$ pa\-ram\-e\-triz\-ing the quasi-smooth hypersurfaces in  $\vert L \vert$ and let  $\mathcal M_\beta$  be the coarse moduli space for the  quasi-smooth hypersurfaces in $\var$ with  divisor class ${\beta}$.

 There is an   ``Noether-Lefschetz theorem" for  odd dimensional simplicial toric varieties as  
 in \cite[Theorem 7.5.1]{CMP03}:

\begin{thm}\label{NL}  Let $\var$ be a simplicial toric variety of dimension $d=2p+1 \geq 3$. 
If the morphism
\begin{equation}\label{gamma2} \gamma_p\colon T_X \mathcal M_{\beta} \otimes PH^{p+1,d-p-2}(X)\to  PH^{p,d-p-1}(X) \end{equation}
 is surjective, then for $z$ away from a countable union of subschemes of ${\mathscr Z}$ of positive codimension one has
$$H^{p,p}(X_z,\Q) = \operatorname{im}[ i^\ast\colon H^{p,p}(\var,\Q) \to H^{2p}(X_z.\Q)].$$

\end{thm}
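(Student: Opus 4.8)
The plan is to transcribe the proof of the Noether--Lefschetz theorem \cite[Thm.~7.5.1]{CMP03} into the toric setting, using the infinitesimal variation of Hodge structure for quasi-smooth hypersurfaces recorded in the Appendix and in \cite{BG1}. First I would reformulate the conclusion. Since $d-1=2p$, the group $H^{d-1}(\var,\C)=H^{2p}(\var,\C)$ is of pure type $(p,p)$, and $i^\ast$ is an injection of $\Q$-Hodge structures compatible with the polarization $Q$; hence $Q$ realises $PH^{d-1}(X_z)$ as the orthogonal complement of $i^\ast H^{d-1}(\var)$, a $\Q$-sub-Hodge-structure, and one gets a splitting defined over $\Q$,
$$H^{p,p}(X_z,\Q)=i^\ast H^{p,p}(\var,\Q)\oplus PH^{p,p}(X_z,\Q),$$
where $PH^{p,p}(X_z,\Q):=PH^{d-1}(X_z)\cap H^{2p}(X_z,\Q)$. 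As the first summand always lies in $\operatorname{im}(i^\ast)$, the asserted equality is equivalent to the vanishing $PH^{p,p}(X_z,\Q)=0$ for very general $z$. Thus the whole problem becomes the statement that the Noether--Lefschetz locus
$$\mathrm{NL}:=\{\,z\in\mathscr Z : PH^{p,p}(X_z,\Q)\neq 0\,\}$$
is a countable union of proper subschemes of $\mathscr Z$.

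Next I would produce this countable union. Over the quasi-smooth locus the primitive cohomology underlies a polarized variation of Hodge structure with Gauss--Manin connection $\nabla$. For each flat rational primitive section $\lambda$, the Hodge locus $\{z:\lambda_z\in F^pH^{d-1}(X_z)\}$ is a closed analytic subset of $\mathscr Z$; since the flat rational primitive classes form a countable set, $\mathrm{NL}$ is a countable union of such closed subsets. It then remains to rule out that any of them is dense, i.e.\ to show that no nonzero flat rational primitive class can remain of type $(p,p)$ over a nonempty open subset of $\mathscr Z$.

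For this I would use the infinitesimal obstruction. Griffiths transversality gives the derivative $\bar\nabla_v\colon H^{p,p}(X_z)\to H^{p-1,p+1}(X_z)$ for $v\in T_z\mathscr Z$, and the tangent space at $z$ to the Hodge locus of $\lambda$ is the kernel of $v\mapsto\bar\nabla_v\bar\lambda$, where $\bar\lambda$ is the image of $\lambda$ in $PH^{p,p}(X_z)$. Were this locus dense, then at a general $z$ one would have $\bar\nabla_v\bar\lambda=0$ for all $v\in T_z\mathscr Z$. Here I would observe that the period map factors through $\mathcal M_\beta$: the infinitesimal reparametrizations, i.e.\ the image of $\operatorname{Lie}\operatorname{Aut}(\var)$ in $T_z\mathscr Z$, induce isomorphic Hodge structures and are therefore annihilated by $\bar\nabla$, so $\bar\nabla$ descends to $T_X\mathcal M_\beta$ and the condition becomes $\bar\nabla_v\bar\lambda=0$ for all $v\in T_X\mathcal M_\beta$.

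Finally I would bring in the hypothesis by adjunction. Under the Batyrev--Cox description \cite{BaCox94} recalled in the Appendix, both $\gamma_p$ and the period derivative $\bar\nabla$ are multiplication maps in the Jacobian ring $R=S/J(f)$, with $T_X\mathcal M_\beta$, $PH^{p+1,p-1}(X)$, $PH^{p,p}(X)$, $PH^{p-1,p+1}(X)$ identified with suitable graded pieces, and $Q$ identified with the residue pairing, which is perfect on $PH^{p,p}(X)$ and between $PH^{p+1,p-1}(X)$ and $PH^{p-1,p+1}(X)$. Associativity of the ring multiplication then yields the adjunction identity
$$Q\bigl(\gamma_p(v\otimes\omega),\eta\bigr)=Q\bigl(\omega,\bar\nabla_v\eta\bigr),\qquad v\in T_X\mathcal M_\beta,\ \omega\in PH^{p+1,p-1}(X),\ \eta\in PH^{p,p}(X),$$
so that $\gamma_p$ is surjective if and only if no nonzero $\eta\in PH^{p,p}(X)$ satisfies $\bar\nabla_v\eta=0$ for all $v\in T_X\mathcal M_\beta$. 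The surjectivity hypothesis thus forces $\bar\lambda=0$, a contradiction; hence every component of $\mathrm{NL}$ has positive codimension, and for $z$ outside this countable union $PH^{p,p}(X_z,\Q)=0$, which is the claim. The hard part will be the adjunction identity together with the identification of $\gamma_p$ and $\bar\nabla$ with multiplication in $R$ and the perfectness of the residue pairing on the complementary graded pieces --- this is exactly where quasi-smoothness and simpliciality of $\var$ enter, and it is the material imported from \cite{BG1}; a secondary point is verifying that $\bar\nabla$ really annihilates the reparametrization directions, so that the hypothesis phrased on $T_X\mathcal M_\beta$ is sufficient.
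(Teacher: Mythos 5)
Your overall architecture is the same as the paper's: reformulate the conclusion as the vanishing of rational primitive $(p,p)$ classes for very general $z$, cover the Noether--Lefschetz locus by countably many Hodge loci of flat rational primitive sections (the paper does this on the universal cover of $\mathscr Z$, using that restricted classes are flat), and then kill a flat primitive class at a general point by playing the surjectivity of $\gamma_p$ against an adjunction between $\gamma_p$ and the Gauss--Manin derivative. Your secondary worry about descending from $T_z\mathscr Z$ to $T_X\mathcal M_\beta$ is also handled in the paper, via the Kodaira--Spencer map $\kappa_\beta$ and the finite \'etale cover $\mathcal M^0_\beta$ of $\mathcal M_\beta$.

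The genuine gap is in how you propose to prove the adjunction identity, which you yourself identify as the hard step. You want to obtain it from an identification of the polarization $Q$ with a residue pairing on graded pieces of $R(f)$, asserted to be perfect on $PH^{p,p}(X)$ and between $PH^{p+1,p-1}(X)$ and $PH^{p-1,p+1}(X)$, together with associativity of ring multiplication, and you attribute this to the material imported from \cite{BG1}. That material is not in \cite{BG1}, nor in the appendix, and it is not available at this level of generality: for a quasi-smooth hypersurface in a simplicial (possibly non-Gorenstein) toric orbifold the Jacobian ring $R(f)$ need not be Gorenstein, the toric analogues of Macaulay duality that are known (cf.\ \cite{Mavlyutov-semi}) involve modified rings rather than $R(f)$ itself, and the identification of the topological cup product with ring multiplication followed by a toric residue is precisely the delicate point. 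Associativity buys you nothing until that identification is in place. The paper sidesteps all of this with a soft transcendental argument: for sections $\alpha$ of $\mathscr E^{p,d-p-1}$ and $\eta$ of $\mathscr E^{d-p,p-1}$, the cup product $\alpha\cup\eta$ vanishes identically for type reasons (its type $(d,d-2)$ cannot occur in $H^{2d-2}(X_z)$, which is pure of type $(d-1,d-1)$), so differentiating along the Gauss--Manin connection, which preserves the topological pairing, yields $\nabla_v\alpha\cup\eta=-\alpha\cup\nabla_v\eta$, i.e.\ equation \eqref{compa}. This identity, combined only with the Hodge-theoretic non-degeneracy of the polarization on primitive cohomology (Hodge--Riemann, valid for orbifolds), is exactly what drives the orthogonal-complement computation in the proof of Proposition \ref{partialconv}, with no duality internal to $R(f)$ required. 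If you replace your residue-pairing step by this flatness argument, the rest of your proof goes through and coincides with the paper's.
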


Recall that for any variety $Y$,   $H^{p,p}(Y,\Q) = H^{p,p}(Y,\C)\cap H^{2p}(Y,\Q)$.
 \subsection{Hypersurfaces in $\var$ and their Jacobian Rings}
Let $L$ be an  ample, hence very ample, Cartier divisor, of class $[L]=\beta$ on a simplicial toric variety $\var$ of dimension $d=2p+1$.
Let $f$ be a section of the line bundle
$\cO_\var(L)$  and $X$  be the hypersurface defined by $f$; it turns out that $f \in S_\beta$.
\begin{defin} Let $J(f) \subset S$  be the ideal  in the Cox ring generated by the derivatives of $f$.
The {Jacobian ring} $R(f)$ is defined as $R(f)=S/J(f)$; it is naturally graded by the class group $Cl(\var)$.
 \end{defin}

\begin{thm}\cite[Proposition 13.7; Theorem 10.13]{BaCox94} Let $\var$ be a simplicial toric variety of dimension $d=2p+1$ and $L$ be an  ample,  Cartier divisor, of class $[L]=\beta$
 Let  $\beta_0 = - [ K_{\var }]$ and $\beta=[L]$. Then:
\begin{enumerate}
\item   $T_X\mathcal M_{\beta}\simeq R(f)_{\beta}$.
\item  
$ PH^{p+1,d-p-2}(X) \simeq R(f)_{p\beta-\beta_0 }$.
\label{isoring}
\end{enumerate}\end{thm}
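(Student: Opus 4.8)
The statement collects two classical facts in the toric setting of Batyrev--Cox: the Kodaira--Spencer identification of the tangent space to the moduli with a graded piece of the Jacobian ring, and the Griffiths residue description of primitive cohomology. The plan is to handle the two parts by the two corresponding mechanisms, keeping track of the $Cl(\var)$-grading throughout.

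For part (i) I would realize $\mathcal M_\beta$ as the geometric quotient of the open subscheme of quasi-smooth members of $|L|=\PP(S_\beta)$ by the automorphism group $G=\operatorname{Aut}(\var)$. At a point $[X]$ with $X=\{f=0\}$, the tangent space to $|L|$ is $S_\beta/\C f$, while the tangent space to the $G$-orbit through $f$ is the image of the infinitesimal action of $\operatorname{Lie}(G)=H^0(\var,T_\var)$ on $f$. The key input is Cox's presentation of the global vector fields on $\var$: modulo the semisimple (torus) derivations they are spanned by the operators $g_\rho\,\partial/\partial x_\rho$ with $g_\rho\in S_{[D_\rho]}$, whose action on $f$ is $\sum_\rho g_\rho\,\partial f/\partial x_\rho$. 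Since $\partial f/\partial x_\rho$ has degree $\beta-[D_\rho]$, these span exactly $J(f)_\beta$; the torus part acts on the homogeneous $f$ by scaling, and the toric Euler relations place this direction $\C f$ inside $J(f)_\beta$. Taking the quotient then yields $T_X\mathcal M_\beta\simeq S_\beta/J(f)_\beta=R(f)_\beta$.

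For part (ii) I would run the toric Griffiths residue construction. Because $L$ is Cartier, there is a canonical $Cl(\var)$-homogeneous $d$-form $\Omega$ of degree $\beta_0=-[K_\var]$, and for $A\in S_{m\beta-\beta_0}$ the rational form $A\,\Omega/f^{m}$ has degree $0$, hence descends to a $d$-form on $\var\setminus X$ with pole of order $m$ along $X$. Taking residues sends such forms into $H^{d-1}(X,\C)$, landing in the Hodge subspace $F^{d-m}PH^{d-1}(X)$, with the pole-order filtration matching the Hodge filtration. Specializing to pole order $m=p$ gives numerator degree $p\beta-\beta_0$ and Hodge level $PH^{d-p,p-1}(X)=PH^{p+1,d-p-2}(X)$, since $d=2p+1$. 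The assertion is that the induced map on the associated graded factors through the Jacobian ring and is an isomorphism $R(f)_{p\beta-\beta_0}\xrightarrow{\ \sim\ }PH^{p+1,d-p-2}(X)$.

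The main obstacle is part (ii): one must prove that the residue map is well defined on the graded quotient, that the pole-order filtration on rational $d$-forms induces exactly the Hodge filtration on $PH^{d-1}(X)$, and that the resulting map is bijective. Surjectivity and injectivity rest on the quasi-smoothness of $X$ together with Bott--Steenbrink--Danilov type vanishing theorems for the sheaves of differentials on the simplicial toric variety; these are precisely the places where the ampleness and Cartier hypotheses on $L$ enter, and where the passage from $\PP^d$ to a singular simplicial $\var$ requires genuine care. Part (i), by contrast, is essentially deformation-theoretic bookkeeping once Cox's description of $H^0(\var,T_\var)$ is in hand.
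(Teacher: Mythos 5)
The paper itself offers no proof of this theorem: it is quoted verbatim from Batyrev--Cox \cite{BaCox94} (Proposition 13.7 for the tangent space, Theorem 10.13 for the primitive cohomology), and your outline reconstructs precisely the arguments of that reference --- Cox's description of $\operatorname{Lie}(\operatorname{Aut}(\var))$ acting on $f$ through $J(f)_\beta$ for part (i), and the toric Griffiths residue map with the pole-order/Hodge filtration comparison resting on quasi-smoothness and Bott--Steenbrink--Danilov vanishing for part (ii). Your grading bookkeeping (pole order $m=p$ yielding $R(f)_{p\beta-\beta_0}\simeq PH^{p+1,d-p-2}(X)$ when $d=2p+1$) is consistent, so the proposal is correct and takes essentially the same route as the paper's cited source.
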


%Note that the tangent space $T_X\mathcal M_{\beta}$ at a point representing a hypersurface $X$ is naturally isomorphic to the degree ${\beta}$summand of the Jacobian ring of $f$, that is, $T_X\mathcal M_{\beta}\simeq R(f)_{\beta}$ \cite{BaCox94}. 

\begin{corol}\cite[Proposition 3.4]{BG1} \label{gammaisring}The morphism $\gamma_p$ in equation \eqref{gamma2} coincides with the multiplication in the ring $R(f)$:
\begin{equation}\label{gammaR} R(f)_\beta \otimes R(f)_{p\beta-\beta_0} \to R(f)_{(p+1)\beta-\beta_0}\, \end{equation}
\end{corol}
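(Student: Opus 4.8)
The plan is to identify $\gamma_p$ explicitly as the infinitesimal variation of Hodge structure and then to transport this description through the Batyrev--Cox residue isomorphisms recalled above. First I would note that, for a first-order deformation of $X$ inside $\mathcal M_\beta$, a tangent vector $v\in T_X\mathcal M_\beta$ determines a Kodaira--Spencer class $\kappa(v)\in H^1(X,T_X)$, and that $\gamma_p$ sends $v\otimes\omega$ to the cup product of $\omega$ with $\kappa(v)$, composed with the interior-product contraction $T_X\otimes\Omega^{p+1}_X\to\Omega^p_X$; on Dolbeault groups this is the map $H^{d-p-2}(X,\Omega^{p+1}_X)\to H^{d-p-1}(X,\Omega^p_X)$, restricted to primitive parts. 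This is the standard description of the differential of the period map underlying Theorem \ref{NL}.

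Next I would invoke the residue presentation behind the two isomorphisms recalled above. Under them a primitive class in $PH^{p+1,d-p-2}(X)\cong R(f)_{p\beta-\beta_0}$ is represented by a residue $\operatorname{Res}\bigl(\tfrac{P\,\Omega_0}{f^{\,p}}\bigr)$, where $\Omega_0$ is the canonical toric $d$-form (of class $\beta_0$), the pole order $p$ records the Hodge level, and the numerator $P\in S_{p\beta-\beta_0}$ determines the class modulo $J(f)$. Likewise $T_X\mathcal M_\beta\cong R(f)_\beta$ identifies $v$ with the class of the polynomial $g\in S_\beta$ obtained from the variation $f\mapsto f+t\,g$, while the target $PH^{p,d-p-1}(X)$ is identified, by the same Batyrev--Cox isomorphism one step further down the Hodge filtration, with $R(f)_{(p+1)\beta-\beta_0}$, its classes being residues of pole order $p+1$.

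The heart of the argument is then the computation that cup product with $\kappa(v)$ corresponds to multiplication of numerators by $g$. Differentiating in the deformation direction gives $\partial_t (f+t\,g)^{-p}\big|_{t=0}=-p\,g\,f^{-(p+1)}$, so cupping with the Kodaira--Spencer class sends $\operatorname{Res}\bigl(\tfrac{P\,\Omega_0}{f^{\,p}}\bigr)$ to a nonzero multiple of $\operatorname{Res}\bigl(\tfrac{g\,P\,\Omega_0}{f^{\,p+1}}\bigr)$, whose numerator $g\,P$ lies in $S_{(p+1)\beta-\beta_0}$. This is exactly the multiplication of numerators, and it descends to the asserted map $R(f)_\beta\otimes R(f)_{p\beta-\beta_0}\to R(f)_{(p+1)\beta-\beta_0}$, because by the Griffiths--Dwork reduction a residue whose numerator lies in $J(f)$ is cohomologous to one of lower pole order and hence vanishes in the relevant graded primitive piece; thus the product is well defined modulo $J(f)$ on each factor.

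The main obstacle I expect is this middle step: matching the geometric cup-with-Kodaira--Spencer operation to polynomial multiplication requires the full Batyrev--Cox toric residue calculus, together with careful tracking of the nonzero scalar factors and of the $Cl(\var)$-gradings, so as to confirm that the pole-order shift $p\rightsquigarrow p+1$ corresponds precisely to the degree shift $p\beta-\beta_0\rightsquigarrow(p+1)\beta-\beta_0$. Since the two isomorphisms of the preceding theorem are already available and the Griffiths--Dwork reduction is standard in this toric setting, the proof ultimately amounts to assembling these ingredients rather than establishing them afresh.
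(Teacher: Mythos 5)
Your proposal is correct and takes essentially the same approach as the paper: the paper's proof (via its citation of \cite{BG1} and the machinery reproduced in the Appendix) likewise identifies $\gamma_p$ with the Gauss--Manin/IVHS map, represents primitive classes as residues of forms $P\,\Omega_0/f^k$ with polynomial numerators, observes that the cup product $\tilde\gamma_p$ is multiplication of numerators, and descends to $R(f)$ through the Batyrev--Cox isomorphisms and the residue maps $r_p$. The only cosmetic difference is that you phrase the infinitesimal variation via the Kodaira--Spencer class in $H^1(X,T_X)$ and interior contraction, whereas the paper works directly with the Gauss--Manin connection and cup product on meromorphic top forms on $\var$, which avoids having to interpret those sheaf-theoretic objects on the quasi-smooth (orbifold) hypersurface.
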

\bigskip
\section{ Hodge Conjecture on Toric varieties }

\begin{thm}\label{HT}  Let $\var$ be a simplicial toric variety of dimension $d=2p+1 \geq 3$. Assume that  morphism \begin{equation*} R(f)_\beta \otimes R(f)_{p\beta-\beta_0} \to R(f)_{(p+1)\beta-\beta_0}
\end{equation*}
in  equation  \eqref{gammaR} is surjective.

Then the Hodge conjecture holds for a very general hypersurface in the linear system $\vert L \vert$, that is  any class in $H^{p,p}(X,\Q) $ is represented by  a linear combination of algebraic cycles.
\end{thm}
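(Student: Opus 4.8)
The plan is to assemble the results already established in the preceding sections, the essential input being the Noether–Lefschetz statement of Theorem~\ref{NL} together with the fact that the ambient toric variety itself satisfies the Hodge conjecture; the hypothesis of the present theorem is precisely what feeds Theorem~\ref{NL}.

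First I would invoke Corollary~\ref{gammaisring}, which identifies the morphism $\gamma_p$ of \eqref{gamma2} with the multiplication map \eqref{gammaR} in the Jacobian ring $R(f)$. Under the assumption that this multiplication is surjective, the hypothesis of Theorem~\ref{NL} is met, so for $z$ outside a countable union of positive-codimension subschemes of $\mathscr Z$ --- that is, for the very general hypersurface $X=X_z\in|L|$ --- one obtains
$$H^{p,p}(X,\Q) = \operatorname{im}\bigl[\,i^\ast\colon H^{p,p}(\var,\Q)\to H^{2p}(X,\Q)\,\bigr].$$
This reduces the problem to showing that every class in the image of $i^\ast$ is algebraic on $X$.

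Next I would use that $\var$, being a complete simplicial toric variety, satisfies the Hodge conjecture in a strong form: its rational cohomology is concentrated in even degrees, $H^{2p}(\var,\Q)=H^{p,p}(\var,\Q)$, and is spanned by classes of torus-invariant subvarieties of codimension $p$, all of which are algebraic. Hence it suffices to check that $i^\ast$ carries algebraic classes on $\var$ to algebraic classes on $X$. Since $X$ is a quasi-smooth (Cartier) hypersurface, the inclusion $i\colon X\hookrightarrow\var$ is a regular embedding, so for a codimension-$p$ subvariety $Z\subset\var$ the refined Gysin pullback $i^{!}Z$ is a well-defined codimension-$p$ cycle on $X$ whose cohomology class equals $i^\ast[Z]$ by compatibility of the cycle class map with pullback along a regular embedding. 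Consequently every generator of $\operatorname{im}(i^\ast)$, and therefore every class in $H^{p,p}(X,\Q)$, is represented by a $\Q$-linear combination of algebraic cycles, which is the assertion.

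The genuinely substantive inputs --- the infinitesimal Noether–Lefschetz argument behind Theorem~\ref{NL} and the ring-theoretic translation of $\gamma_p$ in Corollary~\ref{gammaisring} --- have already been carried out, so what remains is essentially verification. The one point that deserves care is the final step: ensuring that intersecting cycles on the orbifold $\var$ with the hypersurface $X$ stays within the realm of algebraic cycles and is compatible with restriction in rational cohomology. Because one is working with orbifolds rather than smooth varieties, this should be phrased through intersection theory with $\Q$-coefficients (Fulton's refined Gysin maps for the regular embedding $i$), but no essential difficulty arises beyond this bookkeeping.
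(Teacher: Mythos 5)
Your proposal is correct and follows essentially the same route as the paper's own proof: identify \eqref{gammaR} with $\gamma_p$ via Corollary~\ref{gammaisring}, invoke Theorem~\ref{NL} to conclude $H^{p,p}(X,\Q)=\operatorname{im}(i^\ast)$ for the very general hypersurface, and use that the toric variety $\var$ satisfies the Hodge conjecture. Your additional care about why restriction preserves algebraicity (refined Gysin pullback along the regular embedding, with $\Q$-coefficients on the orbifold) is a point the paper leaves implicit, and it is a sound way to fill in that step.
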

\begin{proof} 
The Hodge conjecture holds for $\var$ toric varieties, that is, any class in $H^{p,p}(\var,\Q)$ is represented by a linear combination of  algebraic classes. Since \eqref{gammaR} is surjective,  \eqref{gamma2}  is  also surjective and Theorem \ref{NL} implies that   any class in $H^{p,p}(X,\Q) $ is also represented by  a linear combination of algebraic cycles.

\end{proof}

\noindent {\bf Question 1:}  How can one determine when  the morphism in equation \eqref{gamma2} is surjective?

\medskip
Or equivalently:
\medskip

\noindent {\bf Question 2:}  How can one determine when  the morphism in equation \eqref{gammaR} is surjective?

\medskip

\begin{remark}Note that the morphism of equation \eqref{gammaR}
\begin{equation*} R(f)_\beta \otimes R(f)_{p\beta-\beta_0} \to R(f)_{(p+1)\beta-\beta_0}
\end{equation*}
is surjective whenever the morphism
\begin{equation*} S_\beta \otimes S_{p\beta-\beta_0} \to S_{(p+1)\beta-\beta_0}\end{equation*}
is surjective.
\end{remark}
\bigskip

\section{Oda varieties}\label{Oda}

\begin{defin}\label{A1}  A toric variety $\var$ is an Oda variety if the multiplication morphism $S_{\alpha_1 }\otimes S_{\alpha_2} \to S_{\alpha_1+\alpha_2}$
is surjective whenever the classes $\alpha_1$ and $\alpha_2$ in $\operatorname{Pic}(\var)$ are
ample and nef, respectively.
\end{defin}

The  question of the surjectivity of this map was  posed by  Oda in \cite{OdaUnpubl}  under more general  conditions.
This property  can be stated in terms of the Minkowski sum of polytopes, as the  integral points of a polytope associated with a line bundle correspond to   sections of the line bundle. Definition \ref{A1} says  that  the sum $P_{\alpha_1}+ P_{\alpha_2}$  of the polytopes associated with the line bundles ${\mathcal O}_{\var}(\alpha_1)$ and ${\mathcal O}_{\var}(\alpha_2$) is equal to  their Minkowski sum, that is $P_{\alpha_1+\alpha_2}$, the polytope associated with the line bundle ${\mathcal O}_{\var}(\alpha_1+\alpha_2)$. 

The Oda conjecture is open, even for smooth varieties. The projective spaces are Oda varieties 

%The Oda varieties are characterized by Ikeda's Property 2.2  in  \cite{Ikeda09}, as $\alpha_1$ is nef if and only if it is globally generated.  %  \cite[Th.~1.6]{Mavlyutov-semi}.
Some results by Ikeda  can be rephrased as follows.
\begin{thm} \label{Ikeda}\cite[Corollary ~ 4.2]{Ikeda09}
\begin{enumerate} 
\item A smooth toric variety  with Picard number $2$ is an Oda variety.
\item  The total space of a toric projective bundle over an Oda variety is also an Oda variety. 
\end{enumerate}
\end{thm}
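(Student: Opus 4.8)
The plan is to prove (ii) first, since (i) will then follow from Kleinschmidt's classification. So let $B$ be an Oda toric variety and let $X=\PP(E)$ be the projectivization of a split bundle $E=\bigoplus_{j=0}^{s}\cO_B(D_j)$, with $\pi\colon X\to B$ and relative hyperplane class $\xi$, so that $\operatorname{Pic}(X)=\operatorname{Pic}(B)\oplus\Z\,\xi$. I would write a general class as $\alpha=m\,\xi+\pi^\ast L$ and use the projection formula $\pi_\ast\cO_X(m\xi)=\sym^m E$ to decompose its sections by fibre-monomial degree:
\[
H^0(X,\alpha)=\bigoplus_{|a|=m,\ a\ge 0} z^a\cdot H^0\!\big(B,\,N(a)\big),\qquad N(a):=L+\textstyle\sum_j a_j D_j',
\]
where $z_0,\dots,z_s$ are the fibre coordinates and $D_j'$ records the twist of $\xi$ on the $j$-th coordinate section $\sigma_j(B)\subset X$. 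The point of this bookkeeping is that multiplication of sections is graded additively in the fibre exponent, $z^a f\cdot z^b g=z^{a+b}(fg)$, so I can try to reduce surjectivity on $X$ to surjectivity on $B$ one summand at a time.

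The key observation I would exploit is a positivity statement. Restriction of an ample (resp.\ nef) class to the closed subvariety $\sigma_j(B)$ is again ample (resp.\ nef); hence if $\alpha$ is ample (resp.\ nef) on $X$, then all of the classes $M_j:=\alpha|_{\sigma_j(B)}=N(m e_j)\in\operatorname{Pic}(B)$ are ample (resp.\ nef). Since $a\mapsto N(a)$ is affine with $N(m e_j)=M_j$, the summand class is the convex combination $N(a)=\tfrac1m\sum_j a_j M_j$ (an honest lattice class, by the first displayed formula). Because the ample and nef cones of $B$ are convex, it follows automatically that for an ample $\alpha_1=m_1\xi+\pi^\ast L_1$ \emph{every} $N_1(a)$ with $|a|=m_1$ is ample on $B$, and for a nef $\alpha_2=m_2\xi+\pi^\ast L_2$ every $N_2(b)$ with $|b|=m_2$ is nef on $B$.

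I would then finish (ii) by reassembly. Take a target monomial $z^c h$ of degree $\alpha_1+\alpha_2$, with $|c|=m_1+m_2$ and $h\in H^0(B,N(c))$. Since $c\ge 0$ and $|c|\ge m_1$, there is at least one splitting $c=a+b$ with $a,b\ge 0$, $|a|=m_1$, $|b|=m_2$; for any such splitting $N_1(a)$ is ample, $N_2(b)$ is nef, and $N_1(a)+N_2(b)=N(c)$. As $B$ is Oda, $H^0(B,N_1(a))\otimes H^0(B,N_2(b))\to H^0(B,N(c))$ is surjective, so $h=\sum_k f_k g_k$ and therefore $z^c h=\sum_k (z^a f_k)(z^b g_k)$ lies in the image of the multiplication on $X$ (the case $m_2=0$ being the same with $b=0$). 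Since such monomials span the target, $X$ is Oda. Part (i) is then immediate: by Kleinschmidt's classification a smooth complete toric variety of Picard number two is the projectivization of a sum of line bundles over some $\PP^r$, and since $\PP^r$ is an Oda variety, (ii) applies.

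I expect the \textbf{main obstacle} to be not any single hard estimate but the correct numerical setup: pinning down the projection-formula decomposition and, above all, recognizing the intermediate base classes $N(a)$ as convex combinations of the section-restrictions $M_j$. This recognition is exactly what forces every summand class into the ample (resp.\ nef) cone, so that the Oda property of $B$ can be applied term by term; after that the reassembly needs only the \emph{existence} of one admissible fibre splitting, of which there are always many. (The analogous statement for non-split toric projective bundles would be genuinely harder, as $\sym^m E$ no longer decomposes into line bundles and one must instead track Klyachko filtration data, but this is not needed for (i).)
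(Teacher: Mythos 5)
The paper contains no proof of this theorem to compare against: it is quoted verbatim from Ikeda's Corollary 4.2, so the ``paper's own proof'' is a citation. Judged on its own merits, your argument is correct and complete, and it is in essence a reconstruction of Ikeda's original proof rather than a new route. All three pillars hold: (a) a toric $\PP^s$-bundle is the projectivization of a split bundle $E=\bigoplus_j\cO_B(D_j)$, so the projection formula gives the decomposition $H^0(X,m\xi+\pi^\ast L)=\bigoplus_{|a|=m}z^a\cdot H^0(B,N(a))$ with multiplication additive in the fibre exponent; (b) since ampleness and nefness pass to restrictions on the closed subvarieties $\sigma_j(B)\cong B$, the section restrictions $M_j=\sigma_j^\ast\alpha=N(me_j)$ are ample (resp.\ nef), and your identity $N(a)=\tfrac1m\sum_j a_jM_j$ together with convexity of the ample and nef cones forces every integral class $N(a)$, $|a|=m$, into the corresponding cone; (c) a splitting $c=a+b$ with $a,b\ge 0$, $|a|=m_1$, $|b|=m_2$ always exists, so the Oda property of $B$ applies summand by summand and the monomials $z^ch$ span the target. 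Part (i) via Kleinschmidt's classification of smooth complete toric varieties of Picard number $2$ as projectivized split bundles over $\PP^r$, combined with the trivial fact that $\PP^r$ is Oda, is also exactly Ikeda's deduction. Two points you gloss over but which do hold and deserve a line: ampleness of $\alpha_1$ forces $m_1>0$ (restrict to a fibre), without which the convex-combination formula would be vacuous or involve division by zero; and the Oda condition is stated in the paper for the Cox ring, so one should note that for classes in $\operatorname{Pic}$ one has $S_\alpha\cong H^0(X,\cO_X(\alpha))$ compatibly with multiplication, which justifies phrasing everything in terms of section multiplication as you do.
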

 In \cite{BG2} we prove:
\begin{prop}\label{corA1}
Let $\var$ be a projective toric variety. If $\operatorname{Pic}(\var) = \mathbb Z$  and its ample generator $\eta$
is  Castelnuovo-Mumford 0-regular, then $\var$ is an Oda variety.
\end{prop}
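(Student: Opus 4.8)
The plan is to translate the Oda condition into a statement about the section ring of $\eta$ and then to feed that into Castelnuovo--Mumford regularity. Since $\operatorname{Pic}(\var)=\Z$ with ample generator $\eta$, a class in $\operatorname{Pic}(\var)$ is ample precisely when it equals $m\eta$ with $m\ge 1$, and nef precisely when it equals $n\eta$ with $n\ge 0$. Thus Definition \ref{A1} reduces to proving that the multiplication
\[
S_{m\eta}\otimes S_{n\eta}\longrightarrow S_{(m+n)\eta}
\]
is surjective for all $m\ge 1$ and $n\ge 0$. Because $\var$ is a complete toric variety, Cox's description of the total coordinate ring identifies $S_{k\eta}\cong H^0(\var,\cO_\var(k\eta))$ for each $k\ge 0$, so the entire question concerns the section ring $R=\bigoplus_{k\ge 0}H^0(\var,\cO_\var(k\eta))$.

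The case $n=0$ is immediate, since $S_0=\C$ and $S_{m\eta}\otimes\C\to S_{m\eta}$ is an isomorphism. For $n\ge 1$ the key input is Mumford's regularity theorem applied to the structure sheaf. The hypothesis that $\eta$ is $0$-regular means that $\cO_\var$ is $0$-regular with respect to the polarization $\eta$, that is $H^i(\var,\cO_\var(-i\eta))=0$ for all $i>0$. On a complete toric variety an ample Cartier divisor is nef, hence base-point-free, so $\cO_\var(\eta)$ is globally generated and the form of Mumford's theorem valid for an ample, globally generated line bundle applies. It yields that $\cO_\var$ is $k$-regular for every $k\ge 0$ and, crucially, that each multiplication map
\[
H^0(\var,\cO_\var(k\eta))\otimes H^0(\var,\cO_\var(\eta))\longrightarrow H^0(\var,\cO_\var((k+1)\eta))
\]
is surjective for $k\ge 0$; equivalently, $R$ is generated as a $\C$-algebra in degree one.

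From generation in degree one the conclusion follows by a short bootstrap. Write $V=S_\eta$ and let $V^{(k)}\subseteq S_{k\eta}$ denote the span of $k$-fold products of elements of $V$. Surjectivity of the degree-one multiplication maps gives $S_{k\eta}=V^{(k)}$ for all $k\ge 1$ by induction on $k$, the base case $S_{\eta}=V^{(1)}$ being tautological. Hence, for $m,n\ge 1$,
\[
S_{m\eta}\cdot S_{n\eta}=V^{(m)}\cdot V^{(n)}=V^{(m+n)}=S_{(m+n)\eta},
\]
which is exactly the required surjectivity. Combined with the trivial case $n=0$, this shows that every ample $\otimes$ nef multiplication map is surjective, so $\var$ is an Oda variety.

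I expect the only genuine subtlety to lie in the second step: correctly reading ``$\eta$ is $0$-regular'' as $0$-regularity of $\cO_\var$ relative to the polarization $\eta$, checking that $\cO_\var(\eta)$ is globally generated (here the toric fact nef $=$ base-point-free does the work), and invoking the version of Mumford's regularity theorem that produces surjective degree-one multiplication maps for an ample, globally generated line bundle rather than a very ample one. Once that input is secured, the identification $S_{k\eta}\cong H^0(\var,\cO_\var(k\eta))$ and the bootstrap to arbitrary $m,n$ are routine.
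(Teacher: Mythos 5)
The one step you flagged as the ``only genuine subtlety'' is indeed where the proof breaks. The hypothesis ``the ample generator $\eta$ is Castelnuovo--Mumford $0$-regular'' means that the line bundle $\cO_\var(\eta)$ is $0$-regular with respect to the polarization $\eta$, i.e.\ $H^i(\var,\cO_\var((1-i)\eta))=0$ for all $i>0$; you instead read it as $0$-regularity of the structure sheaf, $H^i(\var,\cO_\var(-i\eta))=0$ for all $i>0$. These are not equivalent: the paper's condition says exactly that $\cO_\var$ is $1$-regular with respect to $\eta$, and Mumford's theorem propagates regularity upward ($0$-regular $\Rightarrow$ $1$-regular) but never downward, so your hypothesis is strictly stronger. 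The strictness matters in exactly the cases the proposition is aimed at: on $\var=\PP[1,1,2,\dots,2]$ of dimension $d$ (the paper's singular example in Section 5, whose Oda property is meant to follow from this very proposition) one has $K_\var=-d\eta$ for the Picard generator $\eta$, so $H^d(\var,\cO_\var(-d\eta))\cong H^0(\var,\cO_\var(K_\var+d\eta))^\ast=H^0(\var,\cO_\var)^\ast\neq 0$ and $\cO_\var$ is \emph{not} $0$-regular, whereas $\cO_\var(\eta)$ \emph{is} $0$-regular, since $H^d(\var,\cO_\var((1-d)\eta))\cong H^0(\var,\cO_\var(-\eta))^\ast=0$ and the intermediate cohomology of the sheaves $\cO_\var(j)$ vanishes on a weighted projective space. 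So, as written, your argument proves a weaker statement that misses the paper's key example. (Note also that this paper does not prove the proposition itself; it defers to \cite{BG2}.)

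The good news is that the rest of your architecture is sound and the repair is one line: run Mumford's theorem (Lazarsfeld, \emph{Positivity} I, Thm.\ 1.8.5) with $L=\cO_\var(\eta)$ --- ample, hence globally generated on a complete toric variety, as you correctly note --- but with $F=\cO_\var(\eta)$, the sheaf the hypothesis actually declares $0$-regular, instead of $F=\cO_\var$. Part (ii) then says $\cO_\var(\eta)$ is $(m-1)$-regular for every $m\ge 1$, i.e.\ each $\cO_\var(m\eta)$ is $0$-regular, and part (iii) applied to $F=\cO_\var(m\eta)$ gives surjectivity of $H^0(\var,\cO_\var(m\eta))\otimes H^0(\var,\cO_\var(n\eta))\to H^0(\var,\cO_\var((m+n)\eta))$ for all $n\ge 0$; with your identification $S_{k\eta}\cong H^0(\var,\cO_\var(k\eta))$ and your reduction of the Oda condition to these maps, this is the whole statement, no bootstrap needed. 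Alternatively, if you prefer the degree-one-generation route, part (iii) with $F=\cO_\var(\eta)$ already yields surjectivity of $H^0(\var,\cO_\var(\eta))\otimes H^0(\var,\cO_\var(k\eta))\to H^0(\var,\cO_\var((k+1)\eta))$ for all $k\ge 0$, and your $V^{(m)}\cdot V^{(n)}=V^{(m+n)}$ argument goes through verbatim. This direct application of Mumford's theorem to $\cO_\var(\eta)$ is essentially the proof given in \cite{BG2}.
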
 
% \textcolor{red}{Possiamo migliorare questo risultato?}

%It is quite important to be able to consider 0-regular ample line bundles in view of the following characterization.

%\begin{thm}[\cite{BG2} Thm.~3.8] The only $d$-dimensional    toric variety ($d\ge 2$)  with an ample line bundle which is (-1)-regular is the projective space $\mathbb P^d$. \end{thm}

\bigskip
\section{Oda and Effective Hodge }

We then have an effective Hodge result for very general hypersurfaces in toric simplicial Oda varieties:

\begin{thm} 
Let $\var$ be an Oda variety of dimension  $d=2p+1\ge 3$ and $X \in |L|$,  a very general element, with $L$ very ample of class $[L]=\beta$, such that $p\beta-\beta_0$ is nef. 
 Then the Hodge conjecture holds for the very general hypersurface in the linear system $\vert L \vert$.
\end{thm}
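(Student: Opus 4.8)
The strategy is to observe that the hypotheses have been arranged so that the surjectivity criterion of Theorem~\ref{HT} becomes automatic, and the plan is therefore to produce the required surjectivity of the Jacobian-ring multiplication by lifting it to the Cox ring $S$ and invoking the Oda property. First I would reduce the claim to a statement about $S$. By Theorem~\ref{HT} it suffices to show that the multiplication
$$R(f)_\beta \otimes R(f)_{p\beta-\beta_0} \to R(f)_{(p+1)\beta-\beta_0}$$
of \eqref{gammaR} is surjective. As recorded in the Remark following Theorem~\ref{HT}, this holds as soon as the corresponding multiplication in the Cox ring,
$$S_\beta \otimes S_{p\beta-\beta_0} \to S_{(p+1)\beta-\beta_0},$$
is surjective: since $R(f)$ is a graded quotient of $S$, the two multiplications fit into a commutative square whose vertical quotient maps are surjective, so surjectivity of the top row forces surjectivity of the bottom. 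The entire problem is thus transferred to the Cox ring.

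Next I would verify that this $S$-level multiplication is exactly an instance of the Oda condition. Set $\alpha_1 = \beta$ and $\alpha_2 = p\beta - \beta_0$; then $\alpha_1 + \alpha_2 = (p+1)\beta - \beta_0$, so the target is $S_{\alpha_1+\alpha_2}$. The class $\alpha_1 = [L]$ is ample, because $L$ is assumed very ample, while $\alpha_2 = p\beta - \beta_0$ is nef by hypothesis. Hence $\alpha_1$ and $\alpha_2$ are an ample and a nef class respectively, and Definition~\ref{A1}, applied to the Oda variety $\var$, yields the surjectivity of $S_{\alpha_1}\otimes S_{\alpha_2} \to S_{\alpha_1+\alpha_2}$. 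Tracing the reductions back up then gives the Hodge conjecture for the very general $X \in |L|$.

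Since the argument is essentially a matching of hypotheses, I do not expect a genuine obstacle; the one point that deserves care is checking that $\alpha_1$ and $\alpha_2$ occupy the positions demanded by the Oda definition --- namely that $\beta$ is the ample input and the nef input is precisely the twist $p\beta-\beta_0 = [\,pL + K_\var]$ forced by the Jacobian-ring description of $PH^{p+1,d-p-2}(X)$ underlying Corollary~\ref{gammaisring}. Once the nefness hypothesis on $p\beta-\beta_0$ is in place, the ampleness of $\beta$ comes for free from very ampleness, and no further positivity or vanishing input is required.
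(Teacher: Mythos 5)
Your proposal is correct and is exactly the argument the paper intends: reduce via Theorem~\ref{HT} to surjectivity of the multiplication \eqref{gammaR}, lift it to the Cox ring by the Remark following that theorem, and apply Definition~\ref{A1} with $\alpha_1=\beta$ (ample, since $L$ is very ample) and $\alpha_2=p\beta-\beta_0$ (nef by hypothesis). The paper states this theorem without writing out a proof precisely because it is this immediate chain of its preceding results, so there is nothing to compare beyond noting full agreement.
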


\begin{corol}
\begin{enumerate}
\item (Smooth varieties) If $\var$ is smooth Fano or quasi-Fano, with Picard number $2$, then the Hodge conjecture holds for $X$ very general in  any very ample class $\beta$, $\var$, by Ikeda's  Theorem \ref{Ikeda}.
\item (Singular varieties)  In particular, if $\var=\mathbb P [1,1,2,2, \cdots,2]$ then the Hodge conjecture holds for $X$ very general in  any very ample class $\beta=k \eta$, $k \geq 3$, where $\eta$ is the very ample generator of the Picard group.
\end{enumerate}
\end{corol}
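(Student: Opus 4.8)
The plan is to deduce both statements from the preceding theorem by checking, case by case, its three hypotheses: that $\var$ is an Oda variety of odd dimension $d=2p+1$, that $\beta$ is very ample, and that $p\beta-\beta_0$ is nef, where $\beta_0=-[K_\var]$. Once these hold, the Hodge conjecture for the very general $X\in|L|$ is immediate, so the entire task reduces to producing the Oda property and controlling the nefness.

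For part (i) the Oda property is supplied at once by Ikeda's Theorem \ref{Ikeda}(i), since $\var$ is smooth, projective and of Picard number $2$; the Fano (resp.\ quasi-Fano) hypothesis means precisely that $\beta_0=-[K_\var]$ is ample (resp.\ nef). The one point needing care is the nefness of $p\beta-\beta_0$, and here the literal phrase ``any very ample $\beta$'' is too strong. For example on $\var=\mathbb P^1\times\mathbb P^2$ (smooth, Fano, Picard number $2$, $d=3$, $p=1$) with the very ample class $\beta=H_1+H_2$ one has $\beta_0=2H_1+3H_2$ and $p\beta-\beta_0=-H_1-2H_2$, which is not nef, so the preceding theorem does not apply to this $\beta$. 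What I would actually state is that $p\beta-\beta_0$ becomes nef as soon as $\beta$ is sufficiently positive (precisely, once $p\beta\succeq\beta_0$ in the nef cone), and that for every such very ample $\beta$ the preceding theorem yields the Hodge conjecture for the very general member of $|L|$.

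For part (ii) I would first fix notation. Write $\var=\mathbb P(1,1,2,\dots,2)$ with $2p+2$ weights, two equal to $1$ and the remaining $2p$ equal to $2$, so that $\dim\var=2p+1=d$ is odd and $\operatorname{Pic}(\var)=\mathbb Z\eta$ with very ample generator $\eta=\cO_\var(2)$. Its Cox ring is $S=\mathbb C[x_0,x_1,y_1,\dots,y_{2p}]$ with $\deg x_i=1$ and $\deg y_j=2$, so that $S_{a\eta}$ is the degree-$2a$ part of $S$. The Oda property follows from Proposition \ref{corA1} once $\eta$ is checked to be Castelnuovo--Mumford $0$-regular; more directly, the only odd-degree variables are $x_0,x_1$, so any monomial of even total degree uses an even number of them and hence factors into degree-$2$ monomials of the form $x_ix_j$ or $y_k$. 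Thus the even part of $S$ is generated in degree $2$, the multiplication $S_{a\eta}\otimes S_{b\eta}\to S_{(a+b)\eta}$ is surjective for all $a\ge1$, $b\ge0$, and Definition \ref{A1} is verified.

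It then remains to read off the numerical threshold. From $\sum_i w_i=2(2p+1)$ one gets $K_\var=\cO_\var(-2(2p+1))=-(2p+1)\eta$, hence $\beta_0=(2p+1)\eta$; writing $\beta=k\eta$ gives
\[ p\beta-\beta_0=\bigl(pk-(2p+1)\bigr)\eta, \]
which lies in $\mathbb Z_{\ge0}\,\eta$, i.e.\ is nef, exactly when $pk\ge2p+1$, that is $k\ge2+1/p$; since $k\in\mathbb Z$ and $p\ge1$ this is equivalent to $k\ge3$. For such $k$ all hypotheses of the preceding theorem are met and the Hodge conjecture holds for the very general $X\in|k\eta|$. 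I expect the genuine obstacle to be part (i): unlike the clean uniform bound $k\ge3$ in (ii), there is no single degree bound valid for every Picard-number-$2$ Fano, so the honest formulation is not ``any very ample $\beta$'' but ``any very ample $\beta$ with $p\beta-\beta_0$ nef.''
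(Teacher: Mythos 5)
Your proposal is correct and follows exactly the route the paper intends: the corollary carries no separate proof, being meant as an instance of the theorem of Section 5, with Ikeda's Theorem \ref{Ikeda} supplying the Oda hypothesis in (i), and Proposition \ref{corA1} --- or your more elementary observation that the even part of $S=\C[x_0,x_1,y_1,\dots,y_{2p}]$ is generated in degree $2$ --- supplying it in (ii). Your numerics in (ii), namely $\eta=\cO_\var(2)$, $\beta_0=(2p+1)\eta$, so that $p\beta-\beta_0=(pk-2p-1)\eta$ is nef precisely when $k\ge 2+1/p$, i.e.\ $k\ge 3$ for every integer $k$ and every $p\ge 1$, reproduce the stated threshold $k\ge 3$, which is exactly the nefness condition of the theorem. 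Your criticism of part (i) is also well founded: as literally phrased, ``any very ample class $\beta$'' drops the hypothesis that $p\beta-\beta_0$ be nef, which the invoked theorem requires and which appears explicitly as ``$pL+K_\var$ nef'' in Theorem 1.1 of the introduction; your example $\var=\PP^1\times\PP^2$ with $\beta=H_1+H_2$ shows this hypothesis genuinely fails for some very ample classes on smooth toric Fanos of Picard number $2$ (note also that the Fano/quasi-Fano assumption plays no role in securing the Oda property, since Ikeda's theorem needs only smoothness and Picard number $2$, which reinforces that the clean hypothesis is the nefness condition). One caveat on your counterexample: in dimension $3$ the conclusion of the corollary still holds trivially, since for $p=1$ the Hodge conjecture for the surface $X$ is the Lefschetz $(1,1)$ theorem, so what your example refutes is the derivation rather than the truth of the conclusion; a higher-dimensional variant such as $\beta=H_1+H_2$ on $\PP^1\times\PP^4$ shows the paper's method likewise fails to cover all very ample classes there, so your reformulation --- retaining the hypothesis that $p\beta-\beta_0$ is nef --- is the correct reading of part (i).
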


\bigskip
\section{Hodge and Oda}

 When $p=1$, and $d=3$,  Theorem \ref{HT} gives a Noether-Lefschetz theorem on the Neron-Severi group of $X$.

In the appendix we prove:

\begin{prop}\label{RS}   Let $p=1$, and $d=3$, and $\rk (Cl(X) \otimes \Q)=\rk (Cl(\var) \otimes \Q)$.  Then the morphism \eqref{gammaR}
\begin{equation*}R(f)_\beta \otimes R(f)_{\beta-\beta_0} \to R(f)_{2\beta-\beta_0}\, \end{equation*}
 is surjective.
\end{prop}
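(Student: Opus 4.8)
The plan is to read the surjectivity of the multiplication map \eqref{gammaR} as a statement about the infinitesimal variation of Hodge structure of $X$, and to show that its failure would force the very general member of $|L|$ to acquire a primitive rational $(1,1)$-class, contradicting the rank hypothesis. First I would reformulate that hypothesis cohomologically. Since $X$ is a quasi-smooth orbifold surface, the Lefschetz $(1,1)$-theorem identifies $H^{1,1}(X,\Q)$ with $Cl(X)\otimes\Q$, while the injectivity of $i^\ast$ on $H^2$ identifies $i^\ast H^{1,1}(\var,\Q)$ with a subspace of $H^{1,1}(X,\Q)$ of dimension $\rk(Cl(\var)\otimes\Q)$. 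The equality of ranks then forces $i^\ast H^{1,1}(\var,\Q)=H^{1,1}(X,\Q)$, that is $PH^{1,1}(X,\Q)=0$: the very general hypersurface carries no primitive rational Hodge class of weight two.

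Next I would dualize \eqref{gammaR}. By \cite{BaCox94} the Jacobian ring $R(f)$ is Artinian Gorenstein with socle in the top degree $\sigma=4\beta-2\beta_0$ (so that $R(f)_{2\beta-\beta_0}$ is self-dual), and the pairings $R(f)_\gamma\times R(f)_{\sigma-\gamma}\to R(f)_\sigma\cong\C$ are perfect and compatible with multiplication. A short Macaulay-duality computation, using $\langle\xi,uv\rangle=\langle\xi u,v\rangle$ together with the perfect pairing $R(f)_{3\beta-\beta_0}\times R(f)_{\beta-\beta_0}\to R(f)_\sigma$, shows that \eqref{gammaR} is surjective if and only if the adjoint multiplication $R(f)_{2\beta-\beta_0}\to\Hom\big(R(f)_\beta,R(f)_{3\beta-\beta_0}\big)$, $\xi\mapsto(u\mapsto u\xi)$, is injective. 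Through the isomorphisms of \cite{BaCox94} and Corollary \ref{gammaisring}, this adjoint is precisely the second infinitesimal variation map $\bar\nabla\colon PH^{1,1}(X)\to\Hom\big(T_X\mathcal M_\beta,PH^{0,2}(X)\big)$. Hence it suffices to prove that $K:=\ker\bar\nabla$ vanishes.

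Finally I would spread $X$ out over a dense open $U\subseteq{\mathscr Z}$ and let the spaces $K$ assemble into a subsheaf $\mathcal K$ of the Hodge bundle $\mathcal{PH}^{1,1}$ of the family. Using that the Gauss--Manin connection preserves the polarization $Q$ and the Hodge--Riemann orthogonality $Q(PH^{1,1},PH^{0,2})=0$, one checks that for a local section $\xi$ of $\mathcal K$ the $(2,0)$- and $(0,2)$-components of $\nabla\xi$ vanish and its $(1,1)$-component remains in $\mathcal K$; thus $\mathcal K$ is flat and, being compatible with the Hodge filtration and pure of type $(1,1)$, it underlies a $\Q$-sub-Hodge-structure of $\mathcal{PH}^2$ by Deligne semisimplicity. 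A polarized $\Q$-variation of Hodge structure that is pure of type $(1,1)$ has finite monodromy, so on a finite étale cover it yields flat primitive rational $(1,1)$-classes; equivalently, by Deligne's global invariant cycle theorem these are monodromy-invariant classes, which for our family restrict from $H^2(\var)$ and are therefore non-primitive. Since being very general is unaffected by finite covers, such classes would contradict $PH^{1,1}(X,\Q)=0$ unless $\mathcal K=0$. Therefore $K=0$ and \eqref{gammaR} is surjective.

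The main obstacle is this last step: promoting the kernel $K$, which a priori lives only in the algebraic de Rham realization and is defined over the transcendental field generated by the coefficients of $f$, to genuine Betti-rational Hodge classes. The delicate point is the flatness of $\mathcal K$ --- equivalently, the fact that $\gamma_p$ and $\bar\nabla$ are adjoint for the polarization, so that their common infinitesimally fixed part is an honest sub-Hodge-structure rather than merely a complex subspace. It is exactly here that the polarized variation-of-Hodge-structure formalism (as in \cite{CMP03}), Deligne's theorem of the fixed part, and the computation of the monodromy invariants for hypersurfaces in $\var$ (yielding the vanishing of the primitive invariants) must be combined; the rank hypothesis enters only to guarantee that these invariants vanish.
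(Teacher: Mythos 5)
Your proposal ultimately follows the same architecture as the paper's own proof, which consists of invoking Proposition \ref{partialconv}: there, a class orthogonal to the image of the multiplication map is shown, via the skew-adjointness \eqref{compa} of the Gauss--Manin connection with respect to the polarization, to be killed by the infinitesimal period map, and one then needs to promote such a class to a flat one in order to contradict the hypothesis. Your Macaulay-duality step (``\eqref{gammaR} is surjective iff $\bar\nabla\colon PH^{1,1}(X)\to\Hom(T_X\mathcal M_\beta,PH^{0,2}(X))$ is injective'') is the ring-theoretic counterpart of that orthogonality argument, and your first step (rank equality forces $PH^{1,1}(X,\Q)=0$) matches what the paper implicitly uses. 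So in both your argument and the paper's, everything hinges on passing from ``$\xi\in PH^{1,1}(X)$ is killed by $\bar\nabla$ to first order'' to ``$\xi$ gives rise to a flat, rational, primitive $(1,1)$-class,'' and it is exactly there that your proof has a genuine gap. The flatness of $\mathcal K=\ker\bar\nabla$ is not established by your polarization computation: for a $(1,1)$-section $\xi$ and a \emph{holomorphic} direction $v$, the identity $Q(\nabla_v\xi,\eta)=-Q(\xi,\nabla_v\eta)$ with $\eta$ of type $(0,2)$ only re-proves the vanishing of the $(0,2)$-component of $\nabla_v\xi$, i.e.\ the defining condition of $\mathcal K$. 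The $(2,0)$-component of $\nabla\xi$ arises in the conjugate (antiholomorphic) directions and is governed by the conjugate infinitesimal map; it vanishes only if $\xi$ is also orthogonal to the \emph{conjugate} of the image of $PH^{2,0}$, which is a genuinely different condition, since $\mathcal K$ is cut out by a $\C$-linear, not a real, equation and is not stable under conjugation. First-order constancy of Hodge type does not imply $\nabla$-invariance; without it, $\mathcal K$ does not underlie a rational sub-local system, Deligne's fixed-part/semisimplicity theorems do not apply, and no Betti-rational class is ever produced, so the contradiction with $PH^{1,1}(X,\Q)=0$ never materializes. (You flagged this point yourself, but flagging it is not closing it, and it is the entire content of the statement; the paper compresses the same step into the assertion ``$\nabla_{t_j}\alpha=0$'' in the proof of Proposition \ref{partialconv}.)

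Two further problems. First, the invariant-cycle step is both misapplied and unnecessary: after passing to a finite \'etale cover, monodromy-invariant classes are governed by the cohomology of (a compactification of) the pulled-back total space, not of $\var$, and they need not restrict from $H^2(\var)$ --- for cubic surfaces in $\mathbb P^3$ the classes of the $27$ lines become invariant on such a cover but are certainly not restrictions; on the other hand, a flat rational primitive $(1,1)$-class at a very general point would already contradict $PH^{1,1}(X,\Q)=0$ directly, with no appeal to invariant cycles. Second, your spreading-out/monodromy argument can at best yield surjectivity of \eqref{gammaR} for the \emph{very general} $f$, whereas Proposition \ref{RS} (and the paper's pointwise route through Proposition \ref{partialconv}) concerns the given $f$ whose hypersurface satisfies the rank hypothesis: surjectivity is a Zariski-open condition on $f$, so it does not descend from the very general member to a special one, and $\ker\bar\nabla$ can jump up precisely at such special members. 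A smaller caveat: for a general simplicial toric $\var$ the Jacobian ring is not literally an Artinian Gorenstein ring; the perfect pairings you need in the degrees $\beta-\beta_0$, $2\beta-\beta_0$, $3\beta-\beta_0$ are a nontrivial toric-residue duality statement for quasi-smooth $f$, not a formal consequence of Gorenstein-ness.
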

\begin{proof} See Proposition \ref{partialconv} in the Appendix.
\end{proof}
Proposition \ref{RS} combined with a result of Ravindra and Srinivas gives:
\begin{corol}  Let $p=1$,  $d=3$. Assume that $K_\var +L$  of class $\beta- \beta_0$ is  generated by its global section.  Then the morphism 
\begin{equation*} R(f)_\beta \otimes R(f)_{\beta-\beta_0} \to R(f)_{2\beta-\beta_0}
\end{equation*}
is surjective, for a very general $f \in S_\beta$\end{corol}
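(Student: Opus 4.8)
The plan is to obtain the surjectivity directly from Proposition \ref{RS}, whose only hypothesis beyond $p=1,\ d=3$ is the rank equality $\rk(Cl(X)\otimes\Q)=\rk(Cl(\var)\otimes\Q)$; the point is to arrange this equality for a very general member of $|L|$ by means of a Noether--Lefschetz theorem for divisor class groups.

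First I would rewrite the positivity hypothesis. Since $\beta_0=-[K_\var]$, we have $\beta-\beta_0=[K_\var+L]$, so the assumption that $\beta-\beta_0$ is generated by global sections says exactly that $\cO_\var(K_\var+L)$ is base-point free. For $\var=\PP^3$ and $L=\cO(e)$ this reads $e\ge 4$, the classical Noether--Lefschetz threshold, which is reassuring.

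Next I would invoke the Noether--Lefschetz theorem of Ravindra and Srinivas for the divisor class group. The variety $\var$ is a complete simplicial toric threefold, hence normal and projective; $L$ is very ample and, by the previous step, $K_\var+L$ is base-point free. Their theorem then yields that the restriction homomorphism $Cl(\var)\to Cl(X)$ is an isomorphism for $X$ outside a countable union of proper closed subsets of the parameter space, i.e.\ for a very general $f\in S_\beta$. In particular $\rk(Cl(X)\otimes\Q)=\rk(Cl(\var)\otimes\Q)$ for such $f$, so the hypothesis of Proposition \ref{RS} is met and that proposition delivers the surjectivity of $R(f)_\beta\otimes R(f)_{\beta-\beta_0}\to R(f)_{2\beta-\beta_0}$.

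I expect the only real difficulty to be one of matching hypotheses rather than of new argument. One must confirm that the cited theorem applies to the divisor class group $Cl$ (not merely $\operatorname{Pic}$, which differs from $Cl$ on the singular $\var$ and is not what enters Proposition \ref{RS}), that its positivity requirement is precisely global generation of $K_\var+L$, and that the general $X\in|L|$, being quasi-smooth and hence normal, is an admissible member for which $Cl(X)$ is the right object. Once these compatibilities are checked, the two inputs combine with no further work.
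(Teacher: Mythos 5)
Your proposal is correct and is essentially the paper's own argument: the paper proves this corollary precisely by combining Proposition \ref{RS} with the Ravindra--Srinivas Noether--Lefschetz theorem for the divisor class group \cite[Thm.~1]{RavSri09} (together with a citation to \cite[Thm.~10]{bgl}), exactly as you do. Your write-up merely makes explicit the hypothesis-matching that the paper delegates to the citations --- that $\beta-\beta_0=[K_\var+L]$ being globally generated is the required positivity input, and that $Cl$, not $\operatorname{Pic}$, is the group for which the restriction isomorphism must hold.
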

\begin{proof} \cite[Thm.\ 1]{RavSri09}  and \cite[Thm.\ 10]{bgl}.
\end{proof}

\begin{remark} The results above bring to the following:
\begin{itemize}
\item  Question (Noether-Lefschetz and Oda): Let $d=3$. If  the Noether-Lefschetz  theorem holds, that is, if the morphism 
\begin{equation*} R(f)_\beta \otimes R(f)_{\beta-\beta_0} \to R(f)_{2\beta-\beta_0}
\end{equation*}
is surjective, then for  a very general $f \in S_\beta$,  is the morphism
\begin{equation*} S_\beta \otimes S_{\beta-\beta_0} \to S_{2\beta-\beta_0}\end{equation*}
surjective?
\item Question (Hodge and Oda):  Let $d=2p+1$. If the Hodge conjecture holds, for very general $X \subset \var$, that is, if the morphism 
\begin{equation*} R(f)_\beta \otimes R(f)_{p\beta-\beta_0} \to R(f)_{(p+1)\beta-\beta_0}
\end{equation*}
is surjective, for a very general $f \in S_\beta$,  is the morphism
\begin{equation*} S_\beta \otimes S_{p\beta-\beta_0} \to S_{(p+1)\beta-\beta_0}\end{equation*}
also surjective?
\end{itemize}
\end{remark}
\bigskip

\appendix
\section*{Appendix}
\renewcommand{\thesection}{A}

Let $L$ be an ample, hence a very ample Cartier divisor and  $X \in |L|$ is a quasi-smooth general surface.
 
 The homotopy hyperplane Lefschetz theorem
% which holds for the embedding $ X \hookrightarrow \var $
implies that  $X$ is also simply connected if $\dim (\var) \geq 3$ \cite[Thm.~1.2 Part II]{GoMac88}.  The hard Lefschetz theorem holds also for projective orbifolds  \cite{SaitoKyoto, Zaf09}).

Let  $i\colon X \to \var $ be  the natural  inclusion and  $i^\ast\colon H^\bullet( \var ,\C)
\to H^\bullet(X,\C)$ the associated morphism in cohomology. 
Note that $i^\ast\colon H^{d-1}( \var ,\C) \to H^{d-1}(X,\C)$ is injective by Lefschetz's theorem. %\cite{BaCox94}.(where $d=\dim\var$).

\begin{defin}\cite[Def.~10.9]{BaCox94} The primitive cohomology group $PH^{d-1}(X)$ is the quotient
$H^{d-1}(X,\C)/$ $i^\ast(H^{d-1} (\var ,\C))$.
\end{defin}
The primitive cohomology classes  $PH^{d-1}(X)$ can be represented by  differential forms of top degree on $\var$ with poles along $X$; for every $p$ with $ 0 \le p \le d-1$ there is  a naturally defined residue map
\begin{equation} \label{residue}
r_p \colon H^0(\var,\Omega^d_\var((d-p+1)X)) \to PH^{p,d-p-1}(X)\,.
\end{equation}

Let ${\mathscr Z}$ be the open subscheme of $\vert L \vert$ pa\-ram\-e\-triz\-ing the quasi-smooth hypersurfaces in  $\vert L \vert$, and let $\pi\colon \mathscr X \to {\mathscr Z}$ be the tautological family on ${\mathscr Z}$; we denote by $X_z$  the fiber of $\mathscr X$  at  $z\in \mathscr Z$.
Let $\mathscr H^{d-1}$ be the local system on ${\mathscr Z}$ whose fiber at $z$
is the   cohomology $H^{d-1}(X_z, \C)$, i.e., $\mathscr H^{d-1}=R^{d-1}\pi_\ast\C$.
It defines a flat connection $\nabla$ in the
vector bundle $\mathscr E^{d-1} = \mathscr H^{d-1} \otimes_\C \cO_{\mathscr Z}$, the
{\em Gauss-Manin connection} of $\mathscr E^{d-1}$. Since the hypersurfaces $X_z$ are quasi-smooth, the Hodge structure of the fibres $H^{d-1}(X_z, \C)$ of $\mathscr E^{d-1}$ varies analytically with $z$ \cite{Stee77}. The corresponding filtration defines holomorphic subbundles $F^p\mathscr E^{d-1}$, and the graded object of the filtration defines holomophic bundles $Gr_F^p(\mathscr E^{d-1})$. The bundles
$\mathscr E^{p,d-p-1}$ given by the Hodge decomposition are not holomorphic subbundles of $\mathscr E^{d-1}$, but are diffeomorphic to $Gr_F^p(\mathscr E^{d-1})$, and as such they have a holomorphic structure. The quotient bundles
$\mathscr {PE}^{p,d-p-1}$ of $\mathscr E^{p,d-p-1}$ correspond to the primitive cohomologies of the hypersurfaces $X_z$.
Let $\pi_p: \mathscr E^{d-1}\to \mathscr{PE}^{p,d-p+1}$ be the natural projection.

 We denote by $\tilde\gamma_p$  the cup product
$$ \tilde\gamma_p\colon H^0(\var, \cO_\var(X))   \otimes H^0(\var,\Omega_\var^d((d-p)X))
\to H^0(\var,\Omega_\var^d((d-p+1)X))\,.$$
If $z_0$ is the point in ${\mathscr Z}$ corresponding to $X$, the space $  H^0(\var, \cO_\var(X))/\C(f)$, where $\C(f)$ is the 1-dimensional subspace of $H^0(\var, \cO_\var(X))$ generated by $f$, can be identified with $T_{z_0}{\mathscr Z}$.

 The morphism $\tilde\gamma_p$ induces in cohomology   the Gauss-Manin connection:

\begin{lemma} Let $\sigma_0$ be a primitive class in $PH^{p,d-p-1}(X)$,
let $v\in T_{z_0}{\mathscr Z}$, and let $\sigma$ be a section of $\mathscr E^{p,d-p-1}$
along a curve in $\mathscr Z$ whose tangent vector at $z_0$ is $v$, such that $\sigma(z_0)=\sigma_0$.

Then   \begin{equation}\label{GM} \pi_{p-1}( \nabla_v(\sigma)) =r_{p-1}( \tilde\gamma_p(\tilde v\otimes \tilde\sigma))
\end{equation}
where  $r_p$, $r_{p-1}$ are the residue morphisms defined in equation \eqref{residue}, $\tilde\sigma$ is an element in $H^0(\var,\Omega_\var^d((d-p+1)X))$
such that $r_p(\tilde\sigma)=\sigma_0$, and $\tilde v$ is a pre-image of $v$ in $H^0(\var, \cO_\var(X))$.

In particular the following diagram commutes:
\begin{equation}\label{commuta}
\xymatrix{
\displaystyle H^0(\var, \cO_\var(X)) 
  \otimes H^0(\var,\Omega_\var^d((d-p)X))
\ar[r]^{\ \ \ \ \ \ \ \ \ \tilde\gamma_p}\ar[d]_{\phi\otimes r_p} &  H^0(\var,\Omega_\var^d((d-p+1)X))
\ar[d]_{r_{p-1}} \\
T_{z_0}{\mathscr Z} \otimes PH^{p,d-1-p}(X) \ar[r]^{\ \ \ \ \ \ \  \gamma_p }& PH^{p-1,d-p}(X) }
\end{equation}
where $\gamma_p$ is the morphism that maps $v\otimes \alpha$ to $\nabla_v\alpha$ and $\phi$ is  the projection
 $\phi\colon$ $  H^0(\var, \cO_\var(X)) \to T_{z_0}{\mathscr Z}$.
\end{lemma}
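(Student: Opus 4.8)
The plan is to reduce the statement to Griffiths' description of the variation of Hodge structure of $X$ in terms of residues of meromorphic forms, adapted to the quasi-smooth toric setting of Batyrev--Cox. First I would fix $z_0$ and represent the primitive class $\sigma_0 \in PH^{p,d-p-1}(X)$ as a residue $\sigma_0 = r_p(\tilde\sigma)$ of a global meromorphic $d$-form $\tilde\sigma$ on $\var$ with poles along $X = X_{z_0}$, as in \eqref{residue}. The key input here is that the pole-order filtration on the spaces $H^0(\var,\Omega^d_\var(\ast X))$ matches the Hodge filtration $F^\bullet PH^{d-1}(X)$, so that forms of the prescribed pole order compute exactly the graded piece $Gr_F^p$, which is identified with $\mathscr{E}^{p,d-p-1}$ as explained above. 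Likewise I would lift $v \in T_{z_0}\mathscr{Z}$ to a section $\tilde v \in H^0(\var,\cO_\var(X))$ via $\phi$, and realize the tangent direction as a first-order deformation of the defining section, $f_t = f + t\,\tilde v$, whose zero locus is the family $X_{z(t)}$ along the chosen curve.

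The heart of the proof is a local computation. Along that curve I would choose a holomorphic family of meromorphic forms $\tilde\sigma_t$ with poles along $X_{z(t)}$ lifting a section $\sigma(t)$ of $\mathscr{E}^{p,d-p-1}$ with $\sigma(0)=\sigma_0$, and then differentiate the cohomology class $[\tilde\sigma_t]$ at $t=0$. Writing $\tilde\sigma_t$ schematically as a numerator divided by a power of $f_t$, the $t$-derivative produces two kinds of terms: the derivative of the denominator, which by the chain rule brings down a factor $-\tilde v$ and raises the pole order by one---this is precisely the cup product $\tilde\gamma_p(\tilde v \otimes \tilde\sigma)$---and the derivative of the numerator, which leaves the pole order unchanged. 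In the de Rham model the Gauss--Manin derivative $\nabla_v\sigma$ is computed exactly by this $t$-derivative modulo exact forms, so that applying the residue to the leading term recovers $\tilde\gamma_p(\tilde v \otimes \tilde\sigma)$.

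Finally I would apply the projection $\pi_{p-1}$ onto the primitive $(p-1,d-p)$ graded piece. By Griffiths transversality, $\nabla_v$ carries $F^p$ into $F^{p-1}$, and the numerator-derivative terms remain in $F^p$ and therefore vanish in $Gr_F^{p-1}$; what survives is exactly the pole-order-raised term, giving $\pi_{p-1}(\nabla_v\sigma) = r_{p-1}(\tilde\gamma_p(\tilde v \otimes \tilde\sigma))$, which is \eqref{GM}. Commutativity of the displayed diagram then follows at once from the definitions of $\phi$, $r_p$, $r_{p-1}$ and of $\gamma_p$ as $v\otimes\alpha\mapsto\nabla_v\alpha$. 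The main obstacle I expect is the bookkeeping in the quasi-smooth (orbifold) case: one must check that the residue calculus and the identification of the pole-order filtration with the Hodge filtration go through on a simplicial $\var$, and that the result is independent of the choices of lifts $\tilde\sigma$, $\tilde v$ and of the extension $\sigma(t)$---independence that is guaranteed precisely because passing to $Gr_F$ via $\pi_{p-1}$ annihilates all the ambiguous lower-filtration contributions.
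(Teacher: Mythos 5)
The paper itself contains no proof of this lemma: it is stated in the appendix as background material reproduced from \cite{BG1}, so there is no internal argument to compare against line by line. Your proposal is the standard Carlson--Griffiths pole-order/residue computation, transported to the quasi-smooth toric setting exactly as in Batyrev--Cox, and this is precisely the argument underlying the cited result: representing $\sigma_0$ by a form with poles along $X$, deforming $f_t=f+t\tilde v$, observing that differentiating the denominator raises the pole order and produces $\tilde\gamma_p(\tilde v\otimes\tilde\sigma)$ while the numerator-derivative terms stay in $F^p$ and are killed by $\pi_{p-1}$ (which also makes the formula independent of the choices of $\sigma$, $\tilde\sigma$, $\tilde v$); your outline is correct, with the only substantive inputs being the Batyrev--Cox surjectivity of the residue maps and the comparison of pole-order and Hodge filtrations, both available here because $L$ is ample.
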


\begin{lemma} If $\alpha$ and $\eta$ are sections of $\mathscr E^{p,d-p-1}$ and
$\mathscr E^{d-p,p-1}$ respectively, then for every tangent vector $v\in T_{z_0}{\mathscr Z}$,
\begin{equation}\label{compa}\nabla_v\alpha\cup \eta= -   \alpha \cup \nabla_v\eta\,.
\end{equation}
\end{lemma}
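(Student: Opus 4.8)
The plan is to derive \eqref{compa} from the compatibility of the Gauss--Manin connection with the topological intersection pairing, together with a Hodge-bidegree vanishing. First I would recall that cup product followed by evaluation on the fundamental class defines a fibrewise pairing
$$Q\colon \mathscr{H}^{d-1}\otimes\mathscr{H}^{d-1}\to\C,\qquad Q(s,t)=s\cup t,$$
with values in $H^{2(d-1)}(X_z,\C)\cong\C$. Since each $X_z$ is a quasi-smooth projective orbifold, $H^{d-1}(X_z,\C)$ carries a pure Hodge structure and satisfies Poincar\'e duality, so $Q$ is well defined and nondegenerate on the fibres. The key structural fact is that $Q$ is built from purely topological data---the cup product and the locally constant fundamental class of $X_z$---so it is \emph{flat} for the local system $\mathscr{H}^{d-1}=R^{d-1}\pi_\ast\C$; that is, $Q(s,t)$ is locally constant whenever $s,t$ are flat sections.

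From this flatness I would deduce that the Gauss--Manin connection $\nabla$ on $\mathscr{E}^{d-1}=\mathscr{H}^{d-1}\otimes_\C\cO_{\mathscr Z}$ obeys the Leibniz rule with respect to $Q$: for any smooth sections $s,t$ of $\mathscr{E}^{d-1}$ and any $v\in T_{z_0}{\mathscr Z}$,
$$v\cdot Q(s,t)=Q(\nabla_v s,t)+Q(s,\nabla_v t).$$
Indeed, expanding $s$ and $t$ in a local flat frame reduces the identity to the bilinearity of $Q$ and the constancy of $Q$ on flat sections. Note this holds even though $\alpha$ and $\eta$ are only smooth (not holomorphic) sections of the Hodge bundles, because $Q$, $\cup$ and $\nabla$ are all smooth operations.

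Applying this with $s=\alpha$ and $t=\eta$, I would compute the section $\alpha\cup\eta$ fibrewise. At each $z$, the class $\alpha(z)$ has Hodge type $(p,d-p-1)$ and $\eta(z)$ has Hodge type $(d-p,p-1)$, so $\alpha(z)\cup\eta(z)$ lies in the $(d,d-2)$ component of $H^{2(d-1)}(X_z,\C)$. But $X_z$ has complex dimension $d-1$, whence $H^{2(d-1)}(X_z,\C)$ is pure of type $(d-1,d-1)$; since $(d,d-2)\neq(d-1,d-1)$, the product $\alpha(z)\cup\eta(z)$ vanishes for every $z$, i.e.\ $\alpha\cup\eta\equiv 0$ on ${\mathscr Z}$. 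Differentiating this identity along $v$ and invoking the Leibniz rule yields
$$0=v\cdot(\alpha\cup\eta)=\nabla_v\alpha\cup\eta+\alpha\cup\nabla_v\eta,$$
which is exactly \eqref{compa}.

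The only genuinely delicate point is the flatness of $Q$---equivalently, the compatibility of $\nabla$ with the intersection form---in the orbifold setting; this rests on the purity of the Hodge structure on $H^{d-1}(X_z)$ and on Poincar\'e duality for quasi-smooth hypersurfaces, both recorded earlier. Everything else is bookkeeping: in particular Griffiths transversality is not needed, since \eqref{compa} is an equality of complex numbers that follows purely from the vanishing of $\alpha\cup\eta$ and the flatness of the pairing.
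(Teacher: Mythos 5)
Your proof is correct: flatness of the topological intersection pairing $Q$ under the Gauss--Manin connection, the resulting Leibniz rule, and the vanishing of $\alpha\cup\eta$ for Hodge-type reasons (it lies in type $(d,d-2)$, while $H^{2d-2}(X_z,\C)$ is pure of type $(d-1,d-1)$) yield exactly \eqref{compa}. The paper states this lemma without proof, and your argument is precisely the standard one for variations of Hodge structure (cf.\ \cite{CMP03}) that the authors leave implicit, carried over correctly to the quasi-smooth (orbifold) setting where pure Hodge structures and Poincar\'e duality still hold.
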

%\begin{proof} The Gauss-Manin connection is compatible with the cup product by definition, i.e., $$\nabla_v (\alpha\cup\eta) =  \nabla_v\alpha\cup \eta +  \alpha \cup \nabla_v\eta\,. $$ But $\alpha\cup\eta=0$ because it is an element in $\mathscr E^{d,d-2}$.\end{proof}

Let ${\operatorname{Aut}}_\beta(\var)$ be the subgroup of $ \operatorname{Aut}(\var) $
which preserves the grading $\beta$. The coarse moduli space $\mathcal M_\beta$ for the general quasi-smooth hypersurfaces in $\var$ with  divisor class ${\beta}$  may be constructed as a quotient
\begin{equation}\label{moduli} U/\widetilde{\operatorname{Aut}}_\beta(\var)\,,
\end{equation} \cite{BaCox94, AGM}, where $U$ is an open subset of
$H^0(\var,\cO_{\var}(X))$, and $\widetilde{\operatorname{Aut}}_\beta(\var)$ is the unique nontrivial extension
$$ 1 \to D(\Sigma) \to \widetilde{\operatorname{Aut}}_\beta(\var) \to {\operatorname{Aut}}_\beta(\var) \to 1\,. $$
By differentiating, we have    a surjective map
$$\kappa_\beta\colon H^0(\var, \cO_\var(X)) \to T_X \mathcal M_{\beta}\,,$$ which is an analogue of the  Kodaira-Spencer map.

The  local system $\mathscr H^{d-1}$
and its various sub-systems do not descend to the moduli space $\mathcal M_{\beta}$, because the group ${\operatorname{Aut}}_\beta(\var)$ is not connected.  Nevertheless, this group
has a connected subgroup
$\operatorname{Aut}^0_\beta(\var)$ of finite order, and, perhaps after suitably shrinking $U$, the quotient $\mathcal M^0_\beta \stackrel{def}{=} U/\operatorname{Aut}^0_\beta(\var)$ is a finite \'etale covering
 of $\mathcal M_\beta$ \cite{CoxDon, AGM}. Since we are only interested in the tangent space $T_X \mathcal M_{\beta}$,
 we can replace $\mathcal M_{\beta}$ with  $\mathcal M^0_\beta$.

\begin{prop} There is a morphism
\begin{equation} \gamma_p\colon T_X \mathcal M_{\beta} \otimes PH^{p,d-1-p}(X)\to  PH^{p-1,d-p}(X) \end{equation}
such that the   diagram
\begin{equation*}\label{commuta2}\xymatrix{
H^0(\var, \cO_\var(X))
\otimes H^0(\var,\Omega_\var^d((d-p)X))
\ar[r]^{\ \ \ \ \ \ \ \ \ \cup }\ar[d]_{\kappa_\beta\otimes r_p} &  H^0(\var,\Omega_\var^d((d-p+1)X))
\ar[d]_{r_{p-1}} \\
T_X \mathcal M_{\beta} \otimes PH^{p,d-1-p}(X) \ar[r]^{\ \ \ \ \ \ \  \gamma_p }& PH^{p-1,d-p}(X) }
\end{equation*}
commutes.
\end{prop}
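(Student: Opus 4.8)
The plan is to descend to the moduli space $\mathcal M_\beta$ the morphism already available on the parameter space $\mathscr Z$, exploiting the factorization of the Kodaira--Spencer map $\kappa_\beta$ through the projection $\phi$. Concretely, the first Lemma of this Appendix produces, at the point $z_0\in\mathscr Z$ corresponding to $X$, a map $\gamma_p^{\mathscr Z}\colon T_{z_0}\mathscr Z\otimes PH^{p,d-1-p}(X)\to PH^{p-1,d-p}(X)$, $v\otimes\alpha\mapsto\pi_{p-1}(\nabla_v\sigma)$, fitting into the commuting square \eqref{commuta} whose top arrow is the cup product $\tilde\gamma_p$ and whose left arrow is $\phi\otimes r_p$. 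Since scaling $f\mapsto\lambda f$ does not change the hypersurface, $\C(f)$ lies in the kernel of $\kappa_\beta$, so $\kappa_\beta$ factors as $\kappa_\beta=q_\ast\circ\phi$ for a surjection $q_\ast\colon T_{z_0}\mathscr Z\to T_X\mathcal M_\beta$. Working with the \'etale cover $\mathcal M^0_\beta=U/\operatorname{Aut}^0_\beta(\var)$ introduced above (which has the same tangent space), the kernel of $q_\ast$ is exactly the image in $T_{z_0}\mathscr Z$ of the tangent space at $f$ to the $\widetilde{\operatorname{Aut}}^0_\beta(\var)$-orbit.

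First I would record that $\phi$ is surjective and that the residue map $r_p$ is surjective onto $PH^{p,d-p-1}(X)$, so that $\phi\otimes r_p$ is surjective; consequently a morphism $\gamma_p$ on $\mathcal M_\beta$ making the square commute is unique if it exists, and its existence is equivalent to the identity $\gamma_p\circ(q_\ast\otimes\id)=\gamma_p^{\mathscr Z}$. Because $q_\ast\otimes\id$ is surjective with kernel $\ker(q_\ast)\otimes PH^{p,d-1-p}(X)$, constructing $\gamma_p$ reduces to checking that $\gamma_p^{\mathscr Z}$ annihilates this kernel: for every $v$ tangent to the automorphism orbit and every primitive class $\alpha$ one must have $\pi_{p-1}(\nabla_v\sigma)=0$.

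The main obstacle, and the heart of the argument, is precisely this vanishing. The point is that along an $\widetilde{\operatorname{Aut}}^0_\beta(\var)$-orbit the hypersurfaces $X_z$ are all isomorphic to $X$ via automorphisms of $\var$, so their primitive Hodge structures are carried into one another and the period map is constant along the orbit; its differential, which in the relevant graded piece is exactly $v\mapsto\pi_{p-1}(\nabla_v(\cdot))$, therefore kills orbit-tangent directions. I would make this precise by noting that the infinitesimal action of the Lie algebra of $\widetilde{\operatorname{Aut}}^0_\beta(\var)$---generated by the vector fields coming from $\operatorname{Aut}^0_\beta(\var)$ together with the $D(\Sigma)$/scaling part---integrates to genuine isomorphisms of the fibres, under which the Gauss--Manin flat sections are transported, so that $\nabla_v$ preserves the Hodge filtration with trivial graded action in these directions. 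The passage to the connected group $\operatorname{Aut}^0_\beta(\var)$ is what makes the orbit-tangent directions coincide with this infinitesimal action and circumvents the non-connectedness of $\operatorname{Aut}_\beta(\var)$.

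Granting the vanishing, $\gamma_p^{\mathscr Z}$ factors uniquely through $q_\ast\otimes\id$ to yield the desired $\gamma_p\colon T_X\mathcal M_\beta\otimes PH^{p,d-1-p}(X)\to PH^{p-1,d-p}(X)$, and the commutativity of the square in the statement follows by combining the commutativity of \eqref{commuta} with $\kappa_\beta=q_\ast\circ\phi$ and the surjectivity of $\phi\otimes r_p$. Since the arrow labelled $\cup$ is the same cup product as $\tilde\gamma_p$, no further identification of the top row is needed.
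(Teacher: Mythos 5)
Your proposal is correct and follows essentially the same route as the paper: $\gamma_p$ is obtained by descending the Gauss--Manin map of the preceding Lemma from $T_{z_0}\mathscr Z$ to $T_X\mathcal M_\beta$ through the Kodaira--Spencer map $\kappa_\beta$, using the quotient description of the moduli space and the replacement of $\mathcal M_\beta$ by the finite \'etale cover $\mathcal M^0_\beta=U/\operatorname{Aut}^0_\beta(\var)$, which has the same tangent space at $X$. The only difference is one of explicitness: where the paper simply invokes the descent of the local system (and hence of $\nabla$) to $\mathcal M^0_\beta$, you verify directly that the map on $T_{z_0}\mathscr Z$ annihilates the orbit-tangent directions, which is the same content spelled out.
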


 Denote by $H^{d-1}_T(X) \subset H^{d-1}(X)$ the subspace
of  the cohomology classes that are annihilated by the action of the Gauss-Manin connection. Coefficients may be taken in $\C$ or $\Q$. Note that $H^{d-1}_T(X)$ has a Hodge structure.

The  result below is an  ``infinitesimal Noether-Lefschetz theorem", such as  Theorem 7.5.1 in \cite{CMP03}.

\begin{prop}\label{partialconv}For a given $p$ with $ 1 \le p \le d-1$,   
the morphism 
\begin{equation}\label{surj} \gamma_p\colon T_X \mathcal M_{\beta} \otimes PH^{p,d-1-p}(X)\to  PH^{p-1,d-p}(X)  \end{equation}
 is surjective if and only if  $H^{p,d-1-p}_T(X)=i^\ast(H^{p,d-1-p}(\var))$.
\end{prop}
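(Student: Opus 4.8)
The plan is to deduce the equivalence from Poincar\'e duality, trading the surjectivity of $\gamma_p$ for the vanishing of the space orthogonal to its image. Since $X$ is a projective orbifold the hard Lefschetz theorem holds and primitive cohomology carries a polarization; hence the cup product induces a perfect pairing $PH^{p-1,d-p}(X)\times PH^{d-p,p-1}(X)\to\C$. Consequently the morphism \eqref{surj} is surjective if and only if the annihilator of $\operatorname{im}\gamma_p$ inside $PH^{d-p,p-1}(X)$ is zero, so the whole argument reduces to computing that annihilator.

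The engine for the computation is the compatibility \eqref{compa}. If $\eta_0\in PH^{d-p,p-1}(X)$ lies in the annihilator and $\eta$ is a section of the corresponding Hodge bundle with $\eta(z_0)=\eta_0$, then for every $v\in T_X\mathcal M_\beta$ and every $\alpha_0\in PH^{p,d-1-p}(X)$ one has $0=\gamma_p(v\otimes\alpha_0)\cup\eta_0=\nabla_v\alpha\cup\eta=-\alpha\cup\nabla_v\eta$, reading $\gamma_p$ as the primitive part of $\nabla_v$ via the diagram \eqref{commuta}. Because the pairing $PH^{p,d-1-p}(X)\times PH^{d-1-p,p}(X)\to\C$ is again perfect, this forces the pertinent Hodge-graded component of $\nabla_v\eta$ to vanish for every $v$; that is, $\eta_0$ is annihilated by the Gauss--Manin connection. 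Running the implications backwards, the annihilator of $\operatorname{im}\gamma_p$ is exactly the primitive part of the Gauss--Manin-invariant classes.

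It then remains to recognize this primitive invariant space as the obstruction measured by the right-hand side of the statement. The classes $i^\ast H^{d-1}(\var)$ are restrictions of fixed classes on $\var$, hence are flat for $\nabla$ and lie in $H^{d-1}_T(X)$, and they are precisely the part that becomes zero in primitive cohomology. Therefore ``annihilator $=0$'' says that $H^{d-1}_T(X)$ contains nothing beyond $i^\ast H^{d-1}(\var)$ in the relevant bidegree, and after matching bidegrees this is the assertion $H^{p,d-1-p}_T(X)=i^\ast(H^{p,d-1-p}(\var))$. Both implications of the proposition follow from this identification together with the reduction in the first paragraph.

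The main obstacle is exactly this last identification. Duality produces horizontality in the Poincar\'e-conjugate bidegree, so one must reconcile it with the bidegree $(p,d-1-p)$ appearing in the statement, using complex conjugation on the underlying real variation and the polarization furnished by \eqref{compa}; and one must check that the first-order, one-direction-at-a-time horizontality extracted from the pairing is genuinely the annihilation defining $H^{d-1}_T(X)$. This is the point that carries the real content of an infinitesimal Noether--Lefschetz theorem, as in \cite[Theorem 7.5.1]{CMP03}, and I would secure it by appealing to the theorem of the fixed part, which endows $H^{d-1}_T(X)$ with its Hodge structure and allows one to promote infinitesimal invariance at a very general $z_0$ to membership in the fixed part.
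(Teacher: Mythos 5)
Your proposal is correct and follows essentially the same route as the paper: both arguments use the duality pairing on primitive cohomology together with the compatibility \eqref{compa} to identify the annihilator of $\operatorname{Im}\gamma_p$ with the primitive Gauss--Manin-flat classes, and then translate the vanishing of that annihilator into the equality $H^{p,d-1-p}_T(X)=i^\ast(H^{p,d-1-p}(\var))$. The differences are minor: the paper proves only the ``if'' direction by this pairing computation (citing \cite{BG1} for ``only if'') and works with an orthogonal decomposition of $PH^{p-1,d-p}(X)$ itself, whereas you run the identification in both directions using the cup-product pairing with the conjugate bidegree, and you explicitly flag the bidegree-matching and infinitesimal-versus-global flatness issues (to be settled by the theorem of the fixed part) that the paper's proof passes over in silence.
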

\begin{proof} The ``only if'' part was proved in \cite{BG1}. To prove the ``if'' part, 
assume $\gamma_p$ is not surjective, and 
decompose $PH^{p-1,d-p}(X)$ as
$$ PH^{p-1,d-p}(X) =  \operatorname{Im} \gamma_p \oplus ( \operatorname{Im}  \gamma_p)^\perp .$$
Le $\{\eta_i\}$ be a basis of  $PH^{p,d-1-p}(X)$, and $\{t_j\}$ a basis of $T_X \mathcal M_{\beta}$.  Fix values for $i$ and $j$ and let
$\tau = \gamma_p(t_j\otimes\eta_i)$. If $\alpha\in  ( \operatorname{Im}  \gamma_p)^\perp$, then
$$ 0 = \langle \alpha,\tau\rangle  = \langle\alpha, \gamma_p(t_j\otimes\eta_i)\rangle = \langle\nabla_{t_j}\alpha,\eta_i\rangle \quad\mbox{for all} \ i $$
so that $\nabla_{t_j}\alpha=0$  for all $j$, i.e., $\alpha$ is a nonzero element in $H^{p-1,d-p}_T(X)$, which implies that
$H^{p,d-1-p}_T(X)$ is properly contained in $ i^\ast(H^{p,d-1-p}(\var))$.
\end{proof}

\begin{lemma} Let $d=2p+1\ge 3$, and assume that 
 the  hypotheses of the previous Lemma hold for $p=m$. Then for $z$ away from a countable union of subschemes of ${\mathscr Z}$ of positive codimension one has
$$H^{p,p}(X_z,\Q) = \operatorname{im}[ i^\ast\colon H^{p,p}(\var,\Q) \to H^{2p}(X_z.\Q)].$$
\end{lemma}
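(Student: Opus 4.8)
The plan is to deduce this global Noether-Lefschetz statement from the infinitesimal Proposition \ref{partialconv}, in the style of \cite[Theorem 7.5.1]{CMP03}. First I would reformulate the conclusion. Since the primitive cohomology is the cokernel $PH^{d-1}(X_z)=H^{d-1}(X_z,\C)/i^\ast H^{d-1}(\var,\C)$ and $i^\ast$ is an injection of rational Hodge structures, a rational $(p,p)$-class on $X_z$ lies in $\operatorname{im}[i^\ast\colon H^{p,p}(\var,\Q)\to H^{2p}(X_z,\Q)]$ if and only if its image in $PH^{p,p}(X_z)$ vanishes. Thus the asserted identity is equivalent to the absence of a nonzero \emph{primitive} rational Hodge class on $X_z$, and it suffices to prove that the set of $z\in\mathscr Z$ carrying such a class lies in a countable union of subschemes of positive codimension.

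For this I would invoke the structure of the Hodge loci of the polarized variation of Hodge structure carried by $\mathscr E^{d-1}$ over $\mathscr Z$. The Noether-Lefschetz locus is a countable union $\bigcup_\lambda NL_\lambda$, where $\lambda$ ranges over a countable set of flat rational classes and $NL_\lambda\subset\mathscr Z$ is the closed analytic subvariety on which $\lambda$ stays of Hodge type $(p,p)$. Each $NL_\lambda$ is either a proper subvariety, hence of positive codimension, or an entire component of $\mathscr Z$. In the latter case $\lambda$ is a flat section that is everywhere of type $(p,p)$, that is, a class annihilated by the Gauss-Manin connection, so $\lambda\in H^{p,p}_T(X_z)$. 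By Proposition \ref{partialconv} the hypothesis that $\gamma_p$ is surjective is equivalent to $H^{p,p}_T(X_z)=i^\ast(H^{p,p}(\var))$; hence every such codimension-zero $\lambda$ already comes from $\var$ and maps to $0$ in $PH^{p,p}(X_z)$. Consequently all components $NL_\lambda$ carrying a genuinely primitive class have positive codimension, and discarding this countable family yields the conclusion: for $z$ in the complement, every rational $(p,p)$-class on $X_z$ is the restriction of a class on $\var$.

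To make the codimension dichotomy precise I would compute the Zariski tangent space to $NL_\lambda$ using the infinitesimal variation of Hodge structure. Combining the identification of the Gauss-Manin connection with the cup product $\tilde\gamma_p$ in \eqref{GM} and \eqref{commuta} with the commuting square relating $\kappa_\beta$, $r_p$ and $\gamma_p$, one sees that $T_zNL_\lambda=\{v:\gamma_p(v\otimes\lambda)=0\}$, so $NL_\lambda$ fills a component precisely when $\gamma_p(v\otimes\lambda)=0$ for all $v$, recovering $\lambda\in H^{p,p}_T(X_z)$; here the pairing identity \eqref{compa} enters exactly as in the proof of Proposition \ref{partialconv}. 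The step I expect to be the main obstacle is this passage from the infinitesimal to the global statement: establishing the countable, analytic (indeed algebraic) structure of the Hodge loci, via Green's density lemma or \cite{CMP03}, and carrying the computation out on the moduli space $\mathcal M^0_\beta$ through the Kodaira-Spencer map $\kappa_\beta$ rather than on $\mathscr Z$, while keeping track of the quotient nature of $PH$ and of the rational structures that identify a Hodge class on $X_z$ with a primitive class in $PH^{p,p}$.
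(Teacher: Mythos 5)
Your proposal is correct and follows essentially the same route as the paper: decompose the locus of primitive rational $(p,p)$-classes into a countable union of Hodge loci indexed by flat rational classes (the paper realizes these via parallel transport on the universal cover of $\mathscr Z$), observe that each such locus is either of positive codimension or all of $\mathscr Z$, and in the latter case use Proposition \ref{partialconv} to conclude the class is flat, hence lies in $i^\ast(H^{p,p}(\var))$. The extra tangent-space computation for $NL_\lambda$ via the infinitesimal variation of Hodge structure is harmless but not needed, since a proper analytic subvariety automatically has positive codimension.
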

\begin{proof}
Let $\bar {\mathscr Z}$ be the universal cover of ${\mathscr Z}$. On it the (pullback of the) local system
$\mathscr H^{d-1}$ is trivial. Given a class $\alpha\in H^{p,p}(X)$
we can extend it to a global section of $\mathscr H^{d-1}$ by parallel transport using
the Gauss-Manin connection. Define the subset $\bar {\mathscr Z}_\alpha$ of $\bar {\mathscr Z}$ as the common zero locus of the sections $\pi_m(\alpha)$ of $\mathscr E^{m,d-1-m}$ for $p\ne m$ (i.e., the locus where $\alpha$ is of type $(p,p)$).

If $\bar {\mathscr Z}_\alpha=\bar {\mathscr Z}$ we are done because $\alpha$ is in
$H^{d-1}_T(X)$ hence it is in the image of $i^\ast$ by the previous Lemma.
If $\bar {\mathscr Z}_\alpha \ne \bar {\mathscr Z}$, we note that $\bar {\mathscr Z}_\alpha$ is a subscheme of  $\bar {\mathscr Z}$.

We subtract from ${\mathscr Z}$ the union of  the projections of the
 subschemes $\bar {\mathscr Z}_\alpha$  where $\bar {\mathscr Z}_\alpha \ne \bar {\mathscr Z}$. The set of these varieties is countable because we are considering rational classes.
\end{proof}

\bigskip\frenchspacing
\def\cprime{$'$}

%
%
%\bibliographystyle{siam}
%\bibliography{paperOH}
%

\end{document}